\newcommand{\abs}[1]{\ensuremath{\left\vert#1\right\vert}}
\newcommand\restr[2]{{
  \left.\kern-\nulldelimiterspace 
  #1 
  \vphantom{|} 
  \right|_{#2} 
  }}
\definecolor{myblue}{cmyk}{1.00,0.56,0.00,0.34}
\definecolor{mygreen}{cmyk}{0.5,0,0.5,0.5}
\definecolor{myred}{cmyk}{0.00,1.00,0.63,0.00}
\definecolor{myyellow}{cmyk}{0.00,0.15,1.00,0.00}
\let\emptyset\varnothing  
\title[Infinite friezes]{Infinite friezes}
\author{Karin Baur, Mark J. Parsons, Manuela Tschabold}
\address{Institut f\"{u}r Mathematik und Wissenschaftliches Rechnen, 
Universit\"{a}t Graz, NAWI Graz, Heinrichstrasse 36, 
A-8010 Graz, Austria}
\email{baurk@uni-graz.at}
\email{mark.parsons@uni-graz.at}
\email{manuela.tschabold@uni-graz.at}
\newcommand{\monthword}[1]{\ifcase#1\or January\or February\or March\or April\or May\or June\or July\or August\or September\or October\or November\or December\fi}
\date{\monthword{\the\month} \the\day, \the\year } 
\begin{document}

\newtheorem{lm}{Lemma}[section]
\newtheorem{prop}[lm]{Proposition}
\newtheorem{satz}[lm]{Satz}

\newtheorem{theorem}[lm]{Theorem}
\newtheorem*{thm}{Theorem}
\newtheorem{conj}[lm]{Conjecture}

\newtheorem{cor}[lm]{Corollary}
\newtheorem*{corollary}{Corollary}

\theoremstyle{definition}
\newtheorem{defn}[lm]{Definition}
\newtheorem{ex}[lm]{Example}
\newtheorem*{rem}{Remark}
\newtheorem{remark}[lm]{Remark}


\newcommand{\perm}{\operatorname{Perm}\nolimits}
\newcommand{\NN}{\operatorname{\mathbb{N}}\nolimits}
\newcommand{\ZZ}{\operatorname{\mathbb{Z}}\nolimits}
\newcommand{\U}{\operatorname{\mathbb{U}}\nolimits}
\newcommand{\V}{\operatorname{\mathbb{V}}\nolimits}

\begin{abstract}
We provide a characterization of infinite frieze patterns of positive integers 
via triangulations of an infinite strip in the plane. In the periodic case, 
these triangulations may be considered as triangulations of annuli.  
We also give a geometric interpretation of all entries of 
infinite friezes via matching numbers.
\end{abstract}

\maketitle

%
\section*{Introduction}
%

Frieze patterns were introduced and studied by Coxeter in \cite{cox} and 
by Conway-Coxeter in \cite{coco1,coco2}. 
By definition, these are arrays of a finite number of infinite rows, shifted as in the picture below. 
A frieze pattern starts with a row of zeros followed by a 
row of ones and ends with a row of ones followed by a row of zeros. 
All entries in between are positive integers and satisfy 
the unimodular rule, i.e.\ for every diamond with entries $a,b,c,d$ as below, 
we have $ad=bc+1$. Henceforth, we will usually refer to these as \emph{finite friezes}. 

\begin{figure}[H]
\begin{tikzpicture}[font=\normalsize]

  \matrix(m) [matrix of math nodes,row sep={1.2em,between origins},column sep={1.2em,between origins},nodes in empty cells]{
&&&&&0&&0&&0&&0&&0&&0&&&&&&\\
&&&&&&1&&1&&1&&1&&1&&1&&&&&\\
&b&&&&&&2&&1&&3&&2&&1&&3&&&&\\
a&&d&&&&\node{\cdots};&&1&&2&&5&&1&&2&&5&&\node{\cdots};&\\
&c&&&&&&&&1&&3&&2&&1&&3&&2&&\\
&&&&&&&&&&1&&1&&1&&1&&1&&1&\\
&&&&&&&&&&&0&&0&&0&&0&&0&&0\\
};
   
\end{tikzpicture}
\end{figure}

Such friezes are always invariant under a glide reflection and hence periodic. 
In \cite{coco1,coco2}, Conway and Coxeter gave a characterization of finite friezes by 
triangulations of polygons, hence providing a geometric interpretation for them: 
Given a triangulated $n$-gon, let $a_i$ be the number of triangles incident with 
vertex $i$, providing an $n$-tuple 
$(a_1,\dots, a_n)$. 
Then, if we fill the first non-trivial row with infinitely many repeated copies of this 
$n$-tuple, and use the unimodular 
rule, we obtain a finite frieze with $n+1$ rows. 
Conversely, every finite frieze with $n+1$ rows arises in this way. 

Broline, Crowe and Isaacs extended this in \cite{bci} by giving a geometric 
interpretation for all rows using matching numbers for the triangulations. 

Around 2000, there was renewed interest in this topic stemming from the
appearance of frieze patterns in the context of cluster algebras of type A, \cite{cach,propp}. 
Frieze patterns have also been shown to appear in the context of cluster algebras of type D, \cite{bm}.

Frieze patterns and similar patterns have also been studied 
by various authors in the 
last decade, eg.\ \cite{hj}, \cite{ars}, \cite{mgot}, to mention just a few. Recent work \cite{mg} 
by Morier-Genoud gives an excellent overview and points out many different directions 
of research. 

In this article, we study infinite friezes. These are arrays consisting of infinitely many rows of 
positive integers, bounded above by a row of zeros followed by a row of ones, satisfying the unimodular 
rule everywhere. The third author has shown (in \cite{tschabold}) that triangulations of punctured discs 
give rise to periodic infinite friezes and that these satisfy a beautiful arithmetic property. 
Furthermore, \cite{tschabold} 
gives these friezes a geometric interpretation by realizing the entries as matching numbers, and also 
as \enquote{labels} for triangulations (cf.\ \cite[Answer 32]{coco2}). 
The periodic infinite friezes considered 
in \cite{tschabold} can be 
viewed as arising from cluster algebras in 
type $\tilde{A}$, with the exchange relation arising from the unimodular rule. 

It is clear that there are infinite friezes that do not arise from triangulations of punctured discs (see 
\cite[Proposition 2.9]{tschabold}, for example). In this article, we prove that all infinite friezes can be obtained via triangulations of surfaces. In 
the periodic case, infinite friezes arise from triangulations of annuli. In the non-periodic case, we can 
use triangulations of an infinite strip in the plane to obtain the friezes. 

SL$_2$-tilings of the plane were studied in \cite{ars, hj}. These are bi-infinite 
lattices of positive integers and hence are not friezes in our set-up. 
In \cite{hj}, the authors show that some of these tilings 
can be obtained by triangulations of a strip, while in \cite{bhj}, it is explained 
how all SL$_2$-tilings can be obtained from triangulations of a circle with a 
discrete set of marked points 
as well as four accumulation points on the boundary. 
Such triangulations have been introduced by Igusa and Todorov in \cite{it}. 

This article is structured as follows. In Section~\ref{sec:prelims}, 
we recall the definition of infinite friezes and some relations between the entries in friezes. 
In Section~\ref{sec:sequences}, 
we show that increasing entries in the first non-trivial row of an infinite frieze 
results again in an infinite frieze (Theorem~\ref{thm:modifyquiddityrow}). 
After this, we recall in Section~\ref{sec:friezes-triangulations} how 
triangulations of discs and of punctured discs give rise to finite and infinite friezes (respectively) and 
prove that triangulations of annuli 
give rise to infinite friezes
(Theorem~\ref{thm:annulus-frieze}). Furthermore, we 
explain how to go between triangulations of punctured discs and annuli 
(with no marked points on the inner boundary), using asymptotic triangulations. 
In Section~\ref{sec:classification}, we show that every periodic infinite frieze can 
be obtained from a triangulation of an annulus (Theorem~\ref{thm:periodic-combinatorial}). 
The proof provides an algorithm 
for checking whether an infinite periodic sequence of positive integers gives rise to 
a finite frieze, an infinite frieze, or neither.

Finally, in Section~\ref{sec:final-matchings}, we turn our attention to 
triangulations of an infinite strip in the plane, providing a geometric 
interpretation of all entries in an infinite frieze. 
We show that every (admissible) triangulation of an infinite strip 
gives rise to an infinite frieze (Theorem~\ref{thm:arbitrary-matchings}) and 
that every infinite frieze can be realized in this way 
(Theorem~\ref{thm:friezes-realize}). 
Moreover, we show that the entries of an infinite frieze can be interpreted as matching numbers 
between vertices and triangles of any associated triangulation. 
In particular, this result covers periodic infinite friezes (which arise from annuli and hence also 
from \enquote{periodic triangulations} of an infinite strip), and thus extends 
\cite[Theorem 5.21]{tschabold} 
as well as generalizing the aforementioned result of \cite{bci}.

%
\section{Preliminaries}\label{sec:prelims}
%

\begin{defn}
An \emph{infinite frieze} $\mathcal{F}$ of positive integers is an array 
$(m_{ij})_{i,j\in\mathbb{Z}, j\ge i-2}$ of shifted infinite rows such that 
$m_{i,i-2}=0$, $m_{i,i-1}=1$ and $m_{ij}\in\mathbb{Z}_{>0}$ for all $i\leq j$
\begin{figure}[H]
\begin{tikzpicture}[font=\normalsize] 

\matrix(m) [matrix of math nodes,row sep={1.75em,between origins},column sep={1.75em,between origins},nodes in empty cells]{
&0&&0&&0&&0&&0&&&&&\\[-0.25em]
\node{\cdots};&&1&&1&&1&&1&&1&&\node{\cdots};&&&\\[-0.25em]
&&&m_{-1,-1}&&m_{00}&&m_{11}&&m_{22}&&m_{33}&&\\
&&\node{\cdots}; &&m_{-1,0}&&m_{01}&&m_{12}&&m_{23}&& m_{34} &&\node{\cdots};&\\
&&&&&m_{-1,1}&&m_{02}&&m_{13}&&m_{24}&&m_{35} \\
&&&&&&&&\node[rotate=-6.5,shift={(-0.034cm,-0.08cm)}]  {\ddots};&&&&\node[rotate=-6.5,shift={(-0.034cm,-0.08cm)}]  {\ddots};&&&\\
};
     
\end{tikzpicture}
\end{figure}
\noindent 
satisfying 
the \emph{unimodular rule}, i.e.\ 
for every diamond in $\mathcal{F}$ of the form
\begin{figure}[H]
\begin{tikzpicture}[font=\normalsize] 

\matrix(m) [matrix of math nodes,row sep={1.75em,between origins},column sep={1.75em,between origins},nodes in empty cells]{
&m_{i+1,j}&\\
m_{ij}&&m_{i+1,j+1}\\
&m_{i,j+1}&\\
     };
     
\end{tikzpicture}
\end{figure} 

\noindent 
the relation $m_{ij}m_{i+1,j+1}-m_{i+1,j}m_{i,j+1}=1$ holds. 
\end{defn}

We use $(i,j)$ to indicate the position of the entry $m_{ij}$. 
The first non-trivial row of such a frieze is called the {\em quiddity row}. 
It will often be denoted by $(a_i)_{i\in\mathbb{Z}}$, with $a_i=m_{ii}$ for all $i$. 
It will be convenient to extend the infinite frieze by setting 
$m_{ij}=0$ for all $(i,j)$ with $i>j+2$. 

 Note that quiddity means \enquote{essence}, and since the first non-trivial row determines the frieze together 
with the row of $1$'s above it, it makes sense to call $(a_i)_i$ the quiddity row.

\begin{defn}\label{def:period}
If the quiddity row $(a_i)_i$ of an infinite frieze $\mathcal{F}$ has period $n$, we say that the 
frieze has period $n$. 
The $n$-tuple $(a_1,a_2,\dots,a_n)$ is a {\em quiddity sequence} for the frieze. 
If $n$ is minimal such that $\mathcal{F}$ is $n$-periodic, we say that $n$ is the 
{\em shortest period} of $\mathcal{F}$. 
\end{defn}

Clearly, the quiddity sequence of an $n$-periodic infinite frieze is only determined up to cyclic equivalence. Any $n$ consecutive diagonals of an $n$-periodic infinite frieze 
describe the whole frieze completely. 

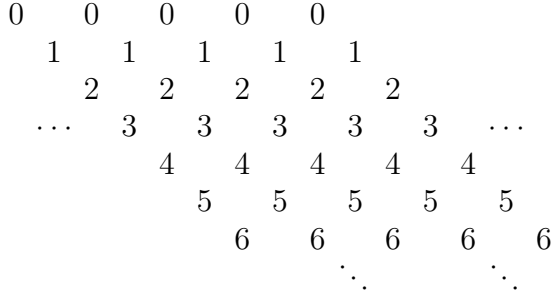
\begin{figure}[t]
\begin{tikzpicture}[font=\normalsize] 

  \matrix(m) [matrix of math nodes,row sep={1.2em,between origins},column sep={1.2em,between origins},nodes in empty cells]{
0&&0&&0&&0&&0&&&&&&\\
&1&&1&&1&&1&&1&&&&&\\
&&2&&2&&2&&2&&2&&&&\\
&\node{\cdots};&&3&&3&&3&&3&&3&&\node{\cdots};&\\
&&&&4&&4&&4&&4&&4&&\\
&&&&&5&&5&&5&&5&&5&\\
&&&&&&6&&6&&6&&6&&6\\
&&&&&&&&&\node[rotate=-6.5,shift={(-0.034cm,-0.08cm)}]  {\ddots};&&&&\node[rotate=-6.5,shift={(-0.034cm,-0.08cm)}]  {\ddots};&\\
};

\end{tikzpicture}
\caption{The infinite frieze with constant quiddity row $a_i=2$.}\label{fig:basicfrieze}
\end{figure}

Figure \ref{fig:basicfrieze} gives an example of a periodic infinite frieze 
$(m_{ij})_{i,j}$. 
All the rows are constant, $m_{ij}=j-i+2$.

It is well known how the entries of a frieze and its quiddity sequence depend on 
each other and we will use these relations several times throughout this article. 
On the one hand, we have
\begin{align}\label{friezerelationa}
m_{ij}=\det \begin{pmatrix}a_i&1&&&0\\
1&a_{i+1}&1&\\
&\ddots&\ddots&\ddots\\
&&1&a_{j-1}&1\\
0&&&1&a_{j}
\end{pmatrix},
\end{align}
 and on the other hand,
\begin{align}\label{friezerelationb}
m_{ij}=a_j m_{i,j-1}-m_{i,j-2}, \  \mbox{ and} \quad m_{ij}=a_i m_{i+1,j}-m_{i+2,j}.
\end{align}

The following lemma gives a further useful relation between the entries in a frieze, which we will make use of in Section~\ref{sec:sequences}.

\begin{lm}\label{friezerelationc}
Let $\mathcal{F}=(m_{ij})_{i,j}$ be a frieze. Then
$$
m_{ij}=m_{i,k-1}m_{kj}-m_{i,k-2}m_{k+1,j}
$$
for every $i\leq k\leq j+1$.
\end{lm}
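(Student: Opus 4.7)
The plan is to argue by induction on $k$, relying only on the two three-term recurrences collected in \eqref{friezerelationb}. In fact the claim can be read as a continuant identity: viewing $m_{ij}$ as the tridiagonal determinant in \eqref{friezerelationa} and cutting the matrix into two blocks between columns $k-1$ and $k$, expansion along the cut should produce exactly $m_{i,k-1}m_{kj}-m_{i,k-2}m_{k+1,j}$. This picture suggests an induction that slides the cut one position to the right.

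For the base case I would take $k=i$. Here the conventions $m_{i,i-1}=1$ and $m_{i,i-2}=0$ collapse the right-hand side immediately to $1\cdot m_{ij}-0\cdot m_{i+1,j}=m_{ij}$. The upper endpoint $k=j+1$ serves as a dual consistency check via $m_{j+1,j}=1$ and $m_{j+2,j}=0$, confirming that the stated range for $k$ is the correct one.

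For the inductive step, assume the identity at some $k$ with $i\le k\le j$. I would apply the second form of \eqref{friezerelationb}, namely $m_{kj}=a_k m_{k+1,j}-m_{k+2,j}$, to expand the term $m_{i,k-1}m_{kj}$, and then regroup to isolate $m_{k+1,j}$ and $m_{k+2,j}$:
$$m_{ij}=\bigl(a_k m_{i,k-1}-m_{i,k-2}\bigr)m_{k+1,j}-m_{i,k-1}m_{k+2,j}.$$
The first form of \eqref{friezerelationb}, applied with index $j$ replaced by $k$, recognises the parenthesis as $m_{ik}=a_k m_{i,k-1}-m_{i,k-2}$. Substituting gives
$$m_{ij}=m_{ik}\,m_{k+1,j}-m_{i,k-1}\,m_{k+2,j},$$
which is precisely the claim at $k+1$, closing the induction.

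There is no genuine obstacle in this argument; it is essentially a one-line continuant manipulation. The only points to watch are (i)~applying the correct form of \eqref{friezerelationb} at each stage—one form shifts the second index, the other shifts the first—and (ii)~being consistent with the extending convention $m_{ij}=0$ for $i>j+2$ when the boundary entries $m_{i,i-2}$, $m_{j+1,j}$, $m_{j+2,j}$ appear at the two ends of the induction.
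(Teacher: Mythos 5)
Your proof is correct, but it runs a different induction from the paper's. You fix $i$ and $j$ and induct on the cut position $k$, sliding it from $k=i$ (where the conventions $m_{i,i-1}=1$, $m_{i,i-2}=0$ make the identity trivial) one step to the right at a time; each step is a single application of $m_{kj}=a_km_{k+1,j}-m_{k+2,j}$ followed by recognising $a_km_{i,k-1}-m_{i,k-2}$ as $m_{ik}$ via the other form of \eqref{friezerelationb}. The paper instead inducts on $j-i$: it expands $m_{ij}=m_{ii}m_{i+1,j}-m_{i+2,j}$, applies the induction hypothesis to \emph{both} terms $m_{i+1,j}$ and $m_{i+2,j}$, and then recombines using \eqref{friezerelationb}, which forces separate base cases for $j-i=0$ and $j-i=1$ and separate treatment of $k=i$ and $k=i+1$. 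Your route is the leaner of the two: the induction hypothesis is invoked once, the step is a two-line regrouping, and the boundary cases are absorbed by the conventions $m_{i,i-2}=0$, $m_{j+1,j}=1$, $m_{j+2,j}=0$ exactly as you note in your point (ii). The one small thing worth making explicit is that the recurrences \eqref{friezerelationb} remain valid at the extremes of the range (e.g.\ $m_{jj}=a_jm_{j+1,j}-m_{j+2,j}=a_j$ for the final step $k=j\to j+1$, and $m_{ii}=a_im_{i,i-1}-m_{i,i-2}=a_i$ for the first step $k=i\to i+1$); both reduce to tautologies under the stated conventions, so nothing breaks.
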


\begin{proof}
Induction on $j-i=r$. If $i-j=0$, then $k=i$ or $k=i+1$. In both cases, the first term on the right hand 
side is $m_{ij}$ and the second term is $0$. 

Consider $j-i=1$. Then $k\in\{i,i+1,i+2\}$. If $k=i$, the right hand side is 
$m_{i,i-1}m_{i,i+1}-m_{i,i-2}m_{i+1,i+1}$ and since $m_{i,i-1}=1$, $m_{i,i-2}=0$, 
the claim follows. The case $k=i+2$ is completely analogous.

Let $k=i+1$. Then the right hand side is 
$m_{ii}m_{i+1,i+1}-m_{i,i-1}m_{i+2,i+1}=m_{ii}m_{i+1,i+1}-1$, and by the 
unimodular rule, the latter is $m_{i,i+1}m_{i+1,i}=m_{i,i+1}$. 

So assume the claim holds for $j-i\le r$ and consider $j-i=r+1$. We have
$m_{ij} = m_{ii}m_{i+1,j}-m_{i+2,j}$ 
(by equation (\ref{friezerelationb})). 
The terms on 
the right hand side have subscripts with difference at most $r$, so the induction hypothesis 
can be applied to replace $m_{i+1,j}$ and $m_{i+2,j}$: 
\begin{eqnarray*}
m_{ij} & = & m_{ii}m_{i+1,j}-m_{i+2,j} \\ 
  & = & m_{ii}(m_{i+1,k-1}m_{kj} - m_{i+1,k-2}m_{k+1,j}) - m_{i+2,k-1}m_{kj} + m_{i+2,k-2}m_{k+1,j}
\end{eqnarray*}
for $i+2\le k\le j+1$. By (\ref{friezerelationb}), $m_{ii}m_{i+1,k-1}=m_{i,k-1}+m_{i+2,k-1}$ and 
$m_{ii}m_{i+1,k-2}=m_{i,k-2}+m_{i+2,k-2}$, and so 
\begin{eqnarray*}
m_{ij} & = & (m_{i,k-1}+m_{i+2,k-1})m_{kj} - (m_{i,k-2}+m_{i+2,k-2})m_{k+1,j} \\ 
   & &  - m_{i+2,k-1}m_{kj}  + m_{i+2,k-2}m_{k+1,j}  \\Ê
   & = & m_{i,k-1}m_{kj} - m_{i,k-2}m_{k+1,j}.
\end{eqnarray*}
The case $k=i$ is clear. For $k=i+1$, we have $m_{ij} = m_{ii}m_{i+1,j}-m_{i+2,j}$ by (\ref{friezerelationb}),
and the result follows immediately since $m_{i,i-1}=1$. 
\end{proof}

%
\section{Quiddity rows}\label{sec:sequences}
%

The next result shows how we can obtain new infinite friezes from known ones. 

\begin{theorem}\label{thm:modifyquiddityrow}
Let $\mathcal{F}$ be an infinite frieze with quiddity row $(a_i)_{i\in\mathbb{Z}}$. Let $k$ 
be a fixed integer. Then for every $b\in\mathbb{Z}_{>0}$, 
the sequence $(a'_i)_{i\in\mathbb{Z}}$ defined by
$$
a'_i=\begin{cases}
a_i+b &\text{if }i=k,\\
a_i &\text{otherwise},
\end{cases}
$$
is the quiddity row of an infinite frieze. 
At position $(i,j)$, the entry of this frieze is 
\begin{equation*} 
m_{ij} + bm_{i,k-1}m_{k+1,j}. 
\end{equation*}
In particular, outside the cone with peak at position $(k,k)$, the entries 
of the new frieze coincide with those of $\mathcal{F}$. 
\end{theorem}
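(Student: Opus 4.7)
The plan is to write down the closed form and verify the frieze axioms. I define
$m'_{ij} := m_{ij} + b\,m_{i,k-1}\,m_{k+1,j}$
for all $i,j \in \mathbb{Z}$, using the extension $m_{ij}=0$ for $i > j+2$. I then check that $(m'_{ij})$ is an infinite frieze whose quiddity row is $(a'_i)$.

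The structural checks come out for free from a single observation: the correction term $m_{i,k-1}\,m_{k+1,j}$ vanishes whenever $i>k$ (then $m_{i,k-1}=0$) or $j<k$ (then $m_{k+1,j}=0$). This kills the correction on the row of zeros, on the row of ones, and at every quiddity position except $(k,k)$ itself, where $m_{k,k-1}\,m_{k+1,k}=1$ contributes exactly $b$. So $m'_{i,i-2}=0$ and $m'_{i,i-1}=1$ hold automatically, the modified quiddity row is $(a'_i)$, positivity follows from $m'_{ij}\ge m_{ij}>0$ for $i\le j$, and the final claim that $m'_{ij}=m_{ij}$ outside the cone $\{(i,j):i\le k\le j\}$ is the same observation.

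The real work is the unimodular rule
$m'_{ij}m'_{i+1,j+1} - m'_{i+1,j}m'_{i,j+1} = 1$.
Expanding in powers of $b$, the $b^0$ term is $1$ by the unimodular rule for $\mathcal{F}$, the four $b^2$ contributions cancel immediately, and after regrouping the $b^1$ part becomes
\[
m_{k+1,j+1}\bigl(m_{ij}m_{i+1,k-1} - m_{i+1,j}m_{i,k-1}\bigr) + m_{k+1,j}\bigl(m_{i,k-1}m_{i+1,j+1} - m_{i+1,k-1}m_{i,j+1}\bigr).
\]
I plan to show each bracket collapses by the same two-step move: invoke Lemma~\ref{friezerelationc} to pivot the entries with right index $j$ (respectively $j+1$) through column $k$, and then apply the unimodular rule $m_{i,k-2}m_{i+1,k-1} - m_{i+1,k-2}m_{i,k-1} = 1$ of $\mathcal{F}$. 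The first bracket collapses to $-m_{k+1,j}$ and the second to $m_{k+1,j+1}$, so the two products cancel.

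The subtle point is that Lemma~\ref{friezerelationc} requires $i\le k\le j+1$, so this uniform derivation covers only $i+1\le k\le j+1$. For the remaining values of $k$, at least one of the four diamond entries has vanishing correction and the unimodular rule either reduces to that of $\mathcal{F}$ or follows from a short direct check (e.g.\ at $k=i$ one has $m_{i+1,k-1}=0$ and the $b$-terms cancel because $m_{k+1,\cdot}=m_{i+1,\cdot}$). These boundary cases are the only remaining nuisance — a small finite case analysis rather than a genuine obstacle.
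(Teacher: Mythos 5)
Your proposal is correct and follows essentially the same route as the paper: define $m'_{ij}=m_{ij}+bm_{i,k-1}m_{k+1,j}$, read off positivity and the quiddity row from the vanishing of the correction term, expand the diamond relation, and kill the linear-in-$b$ part via Lemma~\ref{friezerelationc} together with one unimodular relation of $\mathcal{F}$, treating the boundary diamonds $k=i$ and $k=j+1$ by a direct check. The only (immaterial) difference is that you factor the $b^1$ terms by $m_{k+1,j}$ and $m_{k+1,j+1}$ and close with the diamond at column $k-1$, whereas the paper factors by $m_{i,k-1}$ and $m_{i+1,k-1}$ and closes with the diamond at row $k$.
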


\begin{proof}
We define a lattice $(m'_{ij})_{i,j}$ of (positive) integers whose first non-trivial 
row is $(a_i')_i$ and we show 
that the unimodular rule is satisfied everywhere. 
Let 
$$
m'_{ij}=m_{ij} + bm_{i,k-1}m_{k+1,j}
$$
For $i\le j+1$, the $m_{ij}'$ are all positive. For $k<i$ or $j<k$, 
$m_{i,k-1}m_{k+1,j}=0$ and hence the unimodular rule holds outside the 
downward cone with peak at position $(k,k)$. 

To show that the unimodular rule holds within the cone, we have to show 
$m'_{ij}m'_{i+1,j+1}-m'_{i+1,j}m'_{i,j+1}=1$ for $i+1\le k\le j$. With a single
application of the unimodular rule inside $\mathcal{F}$, we see that

\begin{align*}
&m'_{ij}m'_{i+1,j+1}-m'_{i+1,j}m'_{i,j+1}=\left(m_{ij} + bm_{i,k-1}m_{k+1,j}\right)\left(m_{i+1,j+1}+ bm_{i+1,k-1}m_{k+1,j+1}\right)\\
&\ \ \ -\left(m_{i+1,j} + bm_{i+1,k-1}m_{k+1,j}\right)\left(m_{i,j+1} + bm_{i,k-1}m_{k+1,j+1}\right)\\
&= m_{ij}m_{i+1,j+1}+bm_{ij}m_{i+1,k-1}m_{k+1,j+1}+bm_{i+1,j+1}m_{i,k-1}m_{k+1,j}\\
&\ \ \ -m_{i+1,j}m_{i,j+1}-bm_{i+1,j}m_{i,k-1}m_{k+1,j+1}-bm_{i,j+1}m_{i+1,k-1}m_{k+1,j}\\
&=1+bm_{i+1,k-1}(m_{ij} m_{k+1,j+1}
-\ m_{i,j+1} m_{k+1,j})\\
&\ \ \ +bm_{i,k-1}( m_{i+1,j+1}m_{k+1,j}
- m_{i+1,j} m_{k+1,j+1}).
\end{align*}
Using Lemma~\ref{friezerelationc} to replace $m_{ij}$, $m_{i,j+1}$, $m_{i+1,j+1}$ and $m_{i+1,j}$, 
it then follows that $m'_{ij}m'_{i+1,j+1}-m'_{i+1,j}m'_{i,j+1}=1$.

It remains to check diamonds linking the boundaries of the cone with the region outside. On 
the left side, these are diamonds involving 
$m'_{i-1,k-1}, m'_{ik}$ for $i \le k$: 
\begin{align*}
m'_{i-1,k-1}m'_{ik}-m'_{i,k-1}m'_{i-1,k}&=
m_{i-1,k-1}(m_{ik} + bm_{i,k-1}m_{k+1,k})\\
&\ \ \ -m_{i,k-1}(m_{i-1,k} + bm_{i-1,k-1}m_{k+1,k})\\
&=m_{i-1,k-1}m_{ik}-m_{i,k-1}m_{i-1,k}\\
&=1.
\end{align*}
The ones on the right side involve $m'_{kj}$ and $m'_{k+1,j+1}$ for $j \ge k$, and an analogous computation 
shows $m'_{kj}m'_{k+1,j+1}-m'_{k+1,j}m'_{k,j+1}=1$. 
\end{proof}

Within the infinite frieze, the change of the entry at position $(i,j)$ is determined by the intersection of 
the cone with peak at $(k+1,k-1)$ and the cone with lowest point at position $(i,j)$: 
If this intersection is a rectangle, the two 
corners different from the peaks are multiplied and the result is added $b$ 
times to $m_{ij}$. 
Example ~\ref{ex:increase-entry} below illustrates this. 

\begin{cor}\label{cor:all-2-ok}
Every infinite sequence $(a_i)_{i}$ with $a_i\ge 2$ for all $i\in\mathbb{Z}$ defines an infinite 
frieze of quiddity row $(a_i)_i$. 
\end{cor}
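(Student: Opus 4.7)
The plan is to bootstrap from the basic infinite frieze $\mathcal{F}_0$ depicted in Figure~\ref{fig:basicfrieze}, whose quiddity row is constantly $2$, and to modify it one position at a time using Theorem~\ref{thm:modifyquiddityrow} until the quiddity row becomes $(a_i)_{i}$. Since the hypothesis $a_i \geq 2$ guarantees $b_k := a_k - 2 \in \mathbb{Z}_{\geq 0}$ for every $k$, each step is either trivial (when $b_k = 0$) or a legitimate application of the theorem (when $b_k \geq 1$).

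Concretely, I would fix an enumeration $k_1, k_2, k_3, \ldots$ of $\mathbb{Z}$ (for instance $0, 1, -1, 2, -2, \ldots$) and inductively build a sequence of infinite friezes $\mathcal{F}_n$ with entries $m_{ij}^{(n)}$ by setting $\mathcal{F}_n := \mathcal{F}_{n-1}$ when $b_{k_n} = 0$, and otherwise applying Theorem~\ref{thm:modifyquiddityrow} to $\mathcal{F}_{n-1}$ with $k = k_n$ and $b = b_{k_n}$. By the theorem, each $\mathcal{F}_n$ is an infinite frieze whose quiddity row agrees with $(a_i)_i$ at the positions $k_1, \ldots, k_n$ and equals $2$ elsewhere.

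The target frieze should then be obtained as the pointwise limit. The crucial observation, supplied directly by Theorem~\ref{thm:modifyquiddityrow}, is that modifying $a_{k_n}$ only alters entries lying in the downward cone with peak at $(k_n, k_n)$. Consequently, $m_{ij}^{(n)}$ differs from $m_{ij}^{(n-1)}$ only when $i \leq k_n \leq j$, and since there are only finitely many such integers $k_n$, the sequence $(m_{ij}^{(n)})_{n\geq 1}$ is eventually constant. Defining $m_{ij} := \lim_{n \to \infty} m_{ij}^{(n)}$ therefore yields a well-defined lattice $\mathcal{F}^*$ of positive integers satisfying $m_{i,i-2} = 0$, $m_{i,i-1} = 1$, and $m_{ii} = a_i$ for all $i$. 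The unimodular rule for $\mathcal{F}^*$ at any fixed diamond is then inherited from the same relation in $\mathcal{F}_n$, for $n$ large enough that all four entries of the diamond have already stabilized. The only nontrivial point in this argument is the legitimacy of the pointwise limit, and this is exactly what the cone statement in Theorem~\ref{thm:modifyquiddityrow} provides for free.
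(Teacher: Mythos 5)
Your proof is correct and follows exactly the route the paper intends: the corollary is stated without proof as an immediate consequence of Theorem~\ref{thm:modifyquiddityrow} applied iteratedly to the constant-$2$ frieze of Figure~\ref{fig:basicfrieze}. Your careful justification of the pointwise limit --- that each position $(i,j)$ is altered only by the finitely many modifications at $k$ with $i\le k\le j$, so every entry and every diamond stabilizes --- is precisely the missing detail the paper leaves implicit, and it is handled correctly.
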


\begin{ex}\label{ex:increase-entry}
We consider the infinite frieze from Figure \ref{fig:basicfrieze} with $a_i=2$ for all $i$ 
and replace one entry $a_k$ by $3$. 
The result is the quiddity row $(\cdots ~ 2 ~ 2 ~ 3 ~ 2 ~ 2 ~ \cdots)$ with infinite 
frieze as shown here. The cone with peak at position $(k,k)$ appears as shaded. 
\begin{figure}[H]
\scalebox{.9}{\begin{tikzpicture}[font=\normalsize] 

\matrix(m) [matrix of math nodes,row sep={1.5em,between origins},column sep={1.5em,between origins},nodes in empty cells]{
0&&0&&0&&0&&0&&0&&0&&0&&&&&&&&\\
&1&&1&&1&&1&&1&&1&&1&&1&&&&&&&\\
&&2&&2&&2&&2&&3&&2&&2&&2&&&&&&\\
&&&3&&3&&3&&5&&5&&3&&3&&3&&&&&\\
&&\node{\cdots};&&4&&4&&7&&8&&7&&4&&4&&4&&\node{\cdots};&&\\
&&&&&5&&9&&11&&11&&9&&5&&5&&5&&&\\
&&&&&&11&&14&&15&&14&&11&&6&&6&&6&&\\
&&&&&&&17&&19&&19&&17&&13&&7&&7&&7&\\
&&&&&&&&23&&24&&23&&20&&15&&8&&8&&8\\
&&&&&&&&&&&&&\node[rotate=-6.5,shift={(-0.034cm,-0.08cm)}]  {\ddots};&&&&&&\node[rotate=-6.5,shift={(-0.034cm,-0.08cm)}]  {\ddots};&&&\\
};

\draw[rounded corners,opacity=0,fill=gray,fill opacity=0.2] ($(m-7-7.north)+(-0.5cm,-0.25cm)$) -- ($(m-3-11.north)+(0cm,0.25cm)$) --($(m-9-17.north)+(0.85cm,-0.6cm)$) -- ($(m-9-9.north)+(-0.15cm,-0.6cm)$) -- cycle;

\draw[rounded corners,opacity=0,fill=myblue,fill opacity=0.5] ($(m-6-6.north)+(-0.5cm,-0.25cm)$) -- ($(m-1-11.north)+(0cm,0.2cm)$) -- ($(m-9-19.north)+(0.8cm,-0.6cm)$) -- ($(m-9-17.north)+(1.1cm,-0.6cm)$) -- ($(m-3-11.north)+(0cm,0.5cm)$)  -- ($(m-6-6.south)+(0cm,-0.2cm)$) -- cycle;

\draw[rounded corners,opacity=0,fill=mygreen,fill opacity=0.5] ($(m-1-1.north)+(0cm,0.2cm)$) -- ($(m-8-8.north)+(0cm,0.2cm)$)-- ($(m-1-15.north)+(0cm,0.2cm)$) -- ($(m-1-15.south)+(0.5cm,0.3cm)$)--($(m-8-8.south)+(0cm,-0.2cm)$)-- ($(m-1-1.south)+(-0.5cm,0.3cm)$) -- cycle;

\fill[orange,opacity=0.3] (m-6-6) circle (0.25cm);
\fill[orange,opacity=0.3] (m-3-13) circle (0.25cm);
\fill[red,opacity=0.3] (m-8-8) circle (0.25cm);



\end{tikzpicture}}
\end{figure}
\noindent 
The entry at position $(k-4,k+1)$ is $17$ (in red). It is obtained by adding the product 
of the (orange) entries at positions $(k-4,k-1)$ and $(k+1,k+1)$ to the initial value $7$. 
\end{ex}

\begin{rem} 
If the initial infinite frieze is periodic, 
we can apply Theorem~\ref{thm:modifyquiddityrow} iteratedly, 
to obtain a new periodic infinite frieze. By the remarks after the theorem, this will result in accumulated 
increases in the entries of the frieze, as the regions where entries become larger, intersect infinitely often. 
\end{rem}
 
%
\section{Periodic friezes from triangulations}\label{sec:friezes-triangulations}
%

Triangulations of surfaces are a source of finite and periodic infinite 
friezes. We will show that every periodic 
frieze can be obtained from a surface (Theorem~\ref{thm:periodic-combinatorial}). 
It is a well-known result of Conway and 
Coxeter that every finite frieze arises from a triangulation of a polygon. 
In the infinite case, the surfaces are punctured discs \cite{tschabold}, and annuli. 
In all these cases, quiddity sequences may be read-off directly from the triangulations. We begin by briefly recalling results of \cite{coco1,coco2,tschabold}. 

\subsection{Finite friezes}
Consider a triangulation of a polygon with $n$ vertices, labeled anticlockwise around the 
boundary. Let $a_i$ be the number of matchings between vertex $i$ and triangles of the 
triangulation, i.e.\ the number of triangles incident with vertex $i$. 
Then $(a_i)_{1\le i\le n}$ 
is the quiddity sequence of an $n$-periodic finite frieze with $n+1$ rows (including 
the first and last rows of zeros). 
Such a frieze is invariant under a glide reflection. 
The finite friezes are precisely those arising in this way from triangulations of polygons. 

\subsection{Friezes from punctured discs}
In a similar way, we can associate friezes to triangulations of punctured discs. Let 
$S_n$ be a punctured disc with $n$ marked points on its boundary and a puncture $0$ 
in the interior. We label the marked points $\{1,2,\dots, n\}$ anticlockwise around 
the boundary. 
{\em Arcs} in $S_n$ are non-contractible 
curves whose endpoints are marked points from $\{1,2,\dots, n\}\cup\{0\}$, 
considered up to isotopy fixing endpoints. 
Boundary segments are not considered 
to be arcs. 
We only allow curves that do not 
have self-intersections (though the endpoints may coincide). Arc between a marked point 
on the boundary and the puncture are called {\em central}, all other arcs are {\em peripheral}. 
Two arcs are {\em compatible} if there are 
representatives in their isotopy classes that do not intersect. 
A {\em triangulation} of $S_n$ is a maximal collection of pairwise compatible arcs in 
$S_n$. Every triangulation contains $n$ arcs, and 
induces a subdivision of $S_n$ into three-sided regions. 
Such a triangulation may include a self-folded triangle, i.e.\ a triangular region formed by 
a central arc from a marked point $i$ on the boundary 
to the puncture together with a loop at $i$. 

To a triangulation $T$, we associate an $n$-tuple of positive integers as follows: 
Let $S_n\setminus \{\alpha\mid \alpha \in T\}$ be the collection of open regions inside 
$S_n$. Then for $i=1,\dots, n$, we define $a_i$ to be the number of regions locally visible 
in a small neighborhood around $i$. 
The $n$-tuple $(a_1,\dots, a_n)$ is called the quiddity sequence of the triangulation. 

\begin{figure}[H]
\begin{tikzpicture}[scale=1,font=\normalsize] 

\node (a) at (0,0) [fill,circle,inner sep=1pt] {};
\draw (0,0) circle (1.25cm);
\foreach \x in {-144,-72,0,72,144} {
   \begin{scope}[rotate=\x]
    \node (\x) at (0,-1.25) [fill,circle,inner sep=1pt] {};
   \end{scope}
}
\draw (144) node [above right] {$5$};
\draw (72) node [right] {$4$};
\draw (0) node [below] {$3$};
\draw (-72) node [left] {$2$};
\draw (-144) node [above left] {$1$};

\draw[semithick,opacity=0.5] (a) to (-144);
\draw[semithick,opacity=0.5,out=-140,in=-80] (0.2,-0.2) to (-144);
\draw[semithick,opacity=0.5,out=-30,in=40] (-144) to  (0.2,-0.2);
\draw[semithick,opacity=0.5,out=110,in=0] (72) to  (-144);
\draw[semithick,opacity=0.5,out=-30,in=-150] (-72) to (72);
\draw[semithick,opacity=0.5,out=-5,in=-140] (-72) to (0.4,-0.3) ;
\draw[semithick,opacity=0.5,out=40,in=-10] (0.4,-0.3) to(-144);

\foreach \x in {-144,-72,0,72,144} {
   \begin{scope}[rotate=\x]

\foreach \y in {-20,20} {
   \begin{scope}[rotate=\y]
    \node (\y) at (0,-1.25) [circle,inner sep=0pt] {};
   \end{scope}
}
\draw[semithick,myred,out=45,in=135] (-20) to (20);
   \end{scope}
}

\draw (5,0) node {$(a_i)_{1\le i\le 5}=(6,3,1,3,1)$};

\end{tikzpicture}
\end{figure}

\begin{remark}
If we view a triangulation of $S_n$ as a graph with $n+1$ vertices 
(the marked points) 
and $2n$ edges, then we have $a_i=\deg i -1$ for all $1\le i\le n$. 
\end{remark} 

\begin{theorem}[{{\cite[Theorem 4.6]{tschabold}}}]\label{thm:triangul-frieze-Sn} 
Fix a triangulation of $S_n$ and let $a_i$ be as defined above. 
Then $(a_1,\dots, a_n)$ is the quiddity sequence of a periodic infinite frieze. 
Its shortest period is a divisor of $n$. 
\end{theorem}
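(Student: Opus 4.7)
The plan is to construct the candidate frieze directly from the triangulation and to verify its defining properties via a matching-number interpretation extending that of Broline, Crowe and Isaacs for polygons. The key simplification is to pass to the universal cover of $S_n$, where self-folded triangles \emph{unfold} and the local picture becomes a classical polygon, while the periodicity of the lifted triangulation encodes the puncture.

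First I would fix the universal cover $\pi\colon \widetilde{S}_n\to S_n$ (with the puncture removed), an infinite strip carrying lifts $\{\tilde k\mid k\in\ZZ\}$ of the boundary marked points such that $\tilde k$ projects to the marked point labelled by $k \bmod n$. The triangulation $T$ lifts to a locally finite triangulation $\widetilde T$ invariant under the deck translation $\tilde k\mapsto\widetilde{k+n}$, and any self-folded triangle of $T$ lifts to an ordinary triangle in $\widetilde T$. For integers $i\le j$, define $m_{ij}$ to be the number of injective maps $\mu\colon\{\tilde i,\dots,\tilde j\}\to\widetilde T$ such that $\mu(\tilde k)$ is a triangle incident to $\tilde k$ for every $k$, and complete the array by the conventions $m_{i,i-1}=1$ and $m_{i,i-2}=0$. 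By the definition of $a_i$ we have $m_{ii}=a_i$. The main calculation is to establish the recurrence $m_{ij}=a_j m_{i,j-1}-m_{i,j-2}$ from~(\ref{friezerelationb}) via a Broline--Crowe--Isaacs style argument: conditioning on the triangle assigned to $\tilde j$, each matching of $\{\tilde i,\dots,\tilde{j-1}\}$ admits $a_j$ candidate extensions, and the overcount coming from configurations in which the chosen triangle was already used at $\tilde{j-1}$ corresponds bijectively to matchings of $\{\tilde i,\dots,\tilde{j-2}\}$, of which there are $m_{i,j-2}$. Once the recurrence is in place, the determinant formula~(\ref{friezerelationa}) is automatic, and the unimodular rule follows from the Desnanot--Jacobi identity for $2\times 2$ minors of a tridiagonal matrix. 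Positivity of each $m_{ij}$ is built into the definition, provided at least one valid matching exists; such a matching can be produced greedily by walking along $\tilde i,\tilde{i+1},\dots,\tilde j$ and selecting at each step a previously unused triangle incident to the current vertex.

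For periodicity, the deck translation $\tilde k\mapsto\widetilde{k+n}$ sends valid matchings bijectively to valid matchings, hence $m_{i+n,j+n}=m_{ij}$ for every $(i,j)$; consequently the resulting frieze is $n$-periodic and its shortest period divides $n$. In my view the principal obstacle is the matching recurrence: the Broline--Crowe--Isaacs bijection requires an unambiguous notion of ``the triangle at vertex $\tilde k$'', which is obscured downstairs in $S_n$ by self-folded triangles but becomes clean upstairs in $\widetilde S_n$. Once the universal-cover setup is installed, the remainder is either a direct translation of the polygon argument or a consequence of the classical tridiagonal minor identity.
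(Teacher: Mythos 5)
First, note that the paper does not actually prove this statement: it is imported verbatim from \cite{tschabold} (Theorem 4.6 there), so your proposal can only be measured against the machinery the paper develops for the analogous strip results in Section~\ref{sec:final-matchings}. Your overall strategy --- lift to the universal cover, define the candidate entries as matching numbers, verify the frieze recurrences, and get periodicity from deck-transformation invariance --- is exactly the right shape and matches the spirit of Lemma~\ref{lm:matchingrelation}, Proposition~\ref{prop:entries-matching} and Theorem~\ref{thm:arbitrary-matchings}. The problem is that the step you yourself identify as the heart of the argument, the two-term recurrence $m_{ij}=a_j m_{i,j-1}-m_{i,j-2}$, is not established by the bijection you describe. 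The overcount in $a_j\,\abs{\mathcal{M}_{i,j-1}}$ consists of all pairs $(\mu,\Delta)$ with $\Delta$ incident to $\tilde{j}$ and $\Delta\in\operatorname{im}(\mu)$; the conflicting vertex need not be $\tilde{j-1}$, since a triangle of the lifted triangulation can be incident to $\tilde{j}$ and to some $\tilde{k}$ with $k<j-1$ (e.g.\ a triangle on the peripheral arcs $[\tilde{k},\tilde{j}]$ and $[\tilde{k},\tilde{j}-1]$). Even for the pairs that do conflict at $\tilde{j-1}$, restriction to $\{\tilde{i},\dots,\tilde{j-2}\}$ is not a bijection onto $\mathcal{M}_{i,j-2}$: a given matching of $\{\tilde{i},\dots,\tilde{j-2}\}$ extends to such a bad pair in as many ways as there are \emph{unused} triangles incident to both $\tilde{j-1}$ and $\tilde{j}$, which can be zero or greater than one. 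The identity is true, but proving it combinatorially is precisely the nontrivial content of \cite{bci}, whose argument proceeds by induction on ears rather than by conditioning on the last vertex; the paper's own route (Lemma~\ref{lm:matchingrelation} plus the telescoping in Proposition~\ref{prop:entries-matching}, combined with an induction removing peripheral triangles as in the proof of Theorem~\ref{thm:arbitrary-matchings}) deliberately avoids this two-term bijection.

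Two smaller points. Your greedy construction of a matching fails as stated: if $a_k=1$, the unique triangle at $\tilde{k}$ is also incident to $\tilde{k-1}$ and may already have been consumed there, so existence of a matching (hence positivity) needs either a smarter selection rule or a Hall-type argument. And while the deck-invariance argument for $n$-periodicity and the divisibility of the shortest period is fine, it of course only becomes available once the matching numbers are known to coincide with the frieze entries, i.e.\ after the recurrence gap is closed.
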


\subsection{Friezes from annuli} 

For $n,m$ $\in \mathbb{Z}_{\ge 0}$, with $n>0$, we write 
$A_{n,m}$ for an annulus with $n$ marked points on the outer boundary 
and $m$ marked points on the inner boundary. We will denote the marked 
points on the outer boundary by $1,2,\dots, n$ and the marked points on the 
inner boundary by $n+1,\dots, n+m$. 

It is customary to view $A_{n,m}$ as a cylinder of height $1$. We can cut this 
cylinder open to lie in the plane where the two vertical boundaries are 
identified,  
with lower boundary for 
the outer boundary, upper boundary for the inner boundary of the annulus. 
\begin{figure}[H]
\begin{tikzpicture}[scale=1,font=\normalsize]

\draw[semithick] (0,1) to (4,1);

\foreach \x in {0,1,3,4} {
   \begin{scope}[shift={(\x cm, 0 cm)}]
    \node (\x) at (0,1) [fill,circle,inner sep=1pt] {};
   \end{scope}
}
\draw (0) node [above] {$n\!+\!1$};
\draw (1) node [above] {$n\!+\!2$};
\node at (2cm,1.032cm) [above] {$\dots$};
\draw (3) node [above] {$n\!+\!m$};
\draw (4) node [above] {$n\!+\!1$};

\draw[semithick] (0,-1) to (4,-1);

\node (0) at (0,-1) [fill,circle,inner sep=1pt] {};
\node (1) at (0.75,-1) [fill,circle,inner sep=1pt] {};
\node (3) at (3.25,-1) [fill,circle,inner sep=1pt] {};
\node (4) at (4,-1) [fill,circle,inner sep=1pt] {};

\draw (0) node [below] {$1$};
\draw (1) node [below] {$2$};
\node at (2cm,-1.231cm) [below] {$\dots$};
\draw (3) node [below,shift={(0,-0.09)}] {$n$};
\draw (4) node [below] {$1$};

\draw[semithick,dashed] (0) to (0,1);
\draw[semithick,dashed] (4) to (4,1);

\end{tikzpicture}
\end{figure}
It will be convenient to work in the universal cover $\U=\U(n,m)$ 
of the cylinder and hence of 
$A_{n,m}$, viewing $\U$ as an infinite strip in $\mathbb{R}^2$ of height 
$1$. Whereas marked points of $A_{n,m}$ are denoted by the numbers 
$\{1,2,\dots, n+m\}$, the marked points in the universal cover 
are denoted by coordinates in the plane: 
If $m\ne 0$, the marked points are 
$$
\{(km,0)\mid k\in \mathbb{Z}\}\  \cup \ \{(kn,1)\mid k\in \mathbb{Z}\}. 
$$
For $m=0$, the marked points are 
$$
\{(k,0)\mid k\in \mathbb{Z}\}. 
$$

The covering map $\pi$ from $\U$ to the cylinder (and hence to $A_{n,m}$) 
is induced from wrapping $\U$ around the cylinder. If $m>0$, 
its effect on a boundary vertex is to reduce its first coordinate modulo 
$nm$ and then divide it by $m$ for the lower boundary (resp.\ $n$ for the upper boundary)
and then increase it by 1 (resp.\ $n+1$):
$$
\begin{array}{ll}
\U\ni (r+knm,0)  \mapsto r/m +1 \in A_{n,m} & \mbox{ for } 0\le r< nm ,\\
\U\ni (s+knm,1)  \mapsto s/n+ n+1  \in A_{n,m} & \mbox{ for } 0\le s<  nm. 
\end{array}
$$
If $m=0$, the first coordinate is reduced modulo $n$ and then increased by 1: 
$$
\begin{array}{ll}
\U\ni (t+kn,0)  \mapsto t+1 \in A_{n,0} & \mbox{ for } 0\le t< n. 
\end{array}
$$

\begin{figure}[h]
\begin{tikzpicture}[scale=1.25,font=\normalsize] 

\draw (-0.5,0) node  [shift={(-0.25,0)}] {$\dots$};
\draw (9.5,0) node [shift={(0.25,0)}] {$\dots$};

\draw[opacity=0,fill=gray,fill opacity=0.15] (0,-1) -- (0,1) --  (9,1) -- (9,-1)--cycle;

\draw[semithick] (0,1) to (9,1);

\foreach \x in {0,3,6,9} {
   \begin{scope}[shift={(\x cm, 0 cm)}]
    \node (\x) at (0,1) [fill,circle,inner sep=1pt] {};
   \end{scope}
}
\draw (0) node [above] {$(-6,1)$};
\draw (3) node [above] {$(0,1)$};
\draw (6) node [above] {$(6,1)$};
\draw (9) node [above] {$(12,1)$};

\foreach \x in {0,3,6} {
   \begin{scope}[shift={(\x cm, 0 cm)}]
    \node (\x) at (1.5,1) [fill,circle,inner sep=1pt] {};
   \end{scope}
}
\draw (0) node [above] {$(-3,1)$};
\draw (3) node [above] {$(3,1)$};
\draw (6) node [above] {$(9,1)$};

\draw[semithick] (0,-1) to (9,-1);

\foreach \x in {0,1,2,3,4,5,6,7,8,9} {
   \begin{scope}[shift={(\x cm, 0 cm)}]
    \node (\x) at (0,-1) [fill,circle,inner sep=1pt] {};
   \end{scope}
}

\draw (0) node [below] {$(-6,0)$};
\draw (1) node [below] {$(-4,0)$};
\draw (2) node [below] {$(-2,0)$};
\draw (3) node [below] {$(0,0)$};
\draw (4) node [below] {$(2,0)$};
\draw (5) node [below] {$(4,0)$};
\draw (6) node [below] {$(6,0)$};
\draw (7) node [below] {$(8,0)$};
\draw (8) node [below] {$(10,0)$};
\draw (9) node [below] {$(12,0)$};

\draw[semithick,dashed] (0) to (0,1);
\draw[semithick,dashed] (3) to (3,1);
\draw[semithick,dashed] (6) to (6,1);
\draw[semithick,dashed] (9) to (9,1);

\end{tikzpicture}
\caption{Universal cover for $n=3$, $m=2$.}\label{fig:cover}
\end{figure}
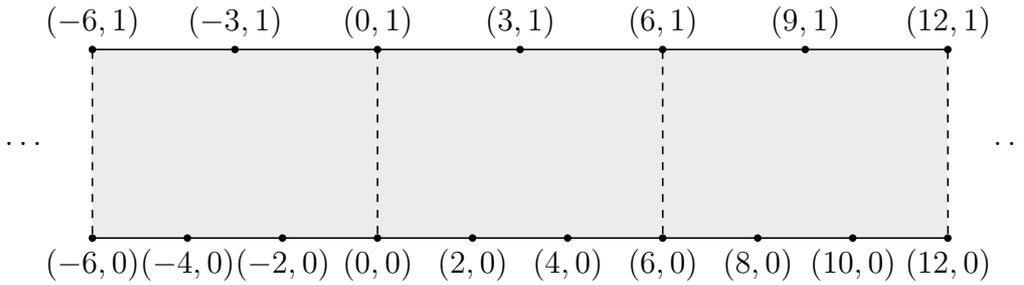

The rectangle with vertices $(0,0), (mn,0),(mn,1),(0,1)$ is a {\em fundamental 
domain} for $(\U,\pi)=(\U(n,m),\pi)$. Any rectangle in $\U$ of the same size 
is also a fundamental domain for $\pi$. The fundamental domains we consider 
are usually rectangles whose vertices are marked points. 
The universal cover with several choices of a fundamental 
domain is illustrated for $A_{3,2}$ in Figure~\ref{fig:cover}. 

We use arcs in $A_{n,m}$ to triangulate the annulus. We define {\em (finite) arcs} 
as isotopy classes of curves just as for the punctured disc. 
Finite arcs with both endpoints on the same boundary are called 
peripheral. An arc is {\em bridging} otherwise. 
We denote by $z$ a non-contractible closed curve in the interior 
of $A_{n,m}$ and call it the {\em meridian}. 
Then we extend the set-up by introducing 
asymptotic arcs as follows. 

\begin{defn}
An {\em asymptotic arc} is a curve that has one endpoint at 
a marked point and winds around the meridian infinitely (considered 
up to isotopy fixing its endpoint). 
An asymptotic arc with endpoint $i$ is called the  
{\em Pr\"ufer arc} $\pi_i$ at $i$ if it spirals around $z$ positively, 
and the {\em adic arc} $\alpha_i$ at $i$ otherwise.
\end{defn}

Asymptotic arcs have been used in the context of categories of tube types to  
extend maximal rigid objects to cluster-tilting objects, cf.\ \cite{bm-tube}. 
These arcs correspond to indecomposable objects that arise through directed systems.

\smallskip

The set of 
arcs of $A_{n,m}$ is the collection of finite arcs together 
with the asymptotic arcs of the annulus. Two 
arcs are said to be {\em compatible}, if there are representatives in their isotopy 
classes that do not intersect.  

\begin{defn}
A {\em triangulation} of $A_{n,m}$ is a maximal collection of pairwise 
compatible arcs. 
\end{defn}

In the figure below, two triangulations of $A_{3,2}$ are given. The triangulation 
on the left contains Pr\"ufer and adic arcs. In the picture, we have used the notation 
$[i,j]$ to describe peripheral arcs. It will also 
be convenient to use this notation for 
bridging arcs, e.g.\ writing $[1,4]$ for $\gamma$. 

\begin{figure}[H]
\begin{tikzpicture}[scale=1.25,font=\normalsize] 

\draw[semithick] (0,1) to (3,1);

\node (a) at (0,1) [fill,circle,inner sep=1pt] {};
\node (b) at (1.5,1) [fill,circle,inner sep=1pt] {};
\node (c) at (3,1) [fill,circle,inner sep=1pt] {};

\draw (a) node [above] {$4$};
\draw (b) node [above] {$5$};
\draw (c) node [above] {$4$};

\draw[semithick,opacity=0.5,out=-30,in=-150] (a) to node [sloped,shift={(0,0.27)}] {$[4,4]$} (c);

\draw[rounded corners,semithick,opacity=0.5] (a) to (0,0.45) to (-0.5,.45);
\draw[opacity=0.5] (a) node [shift={(0.3,-0.5)}] {$\pi_4$};

\draw[rounded corners,semithick,opacity=0.5] (c) to (3,0.35) to (-0.5,.35);
\draw[opacity=0.5] (c) node [shift={(0.3,-0.4)}] {$\pi_4$};

\draw[semithick,opacity=0.5,dashed] (-0.5,0) to node [shift={(-2,0.2)}] {$z$} (3.5,0);

\draw[semithick] (0,-1) to (3,-1);

\foreach \x in {0,1,2,3} {
   \begin{scope}[shift={(\x cm, 0 cm)}]
    \node (\x) at (0,-1) [fill,circle,inner sep=1pt] {};
   \end{scope}
}
\draw (0) node [below] {$1$};
\draw (1) node [below] {$2$};
\draw (2) node [below] {$3$};
\draw (3) node [below] {$1$};

\draw[semithick,opacity=0.5,out=30,in=150] (1) to node [sloped,shift={(0,0.22)}] {$[2,1]$} (3);

\draw[rounded corners,semithick,opacity=0.5] (3) to  (3,-0.25) to (-0.5,-0.25);
\draw[opacity=0.5] (0) node [shift={(0.3,0.4)}] {$\alpha_1$};

\draw[rounded corners,semithick,opacity=0.5] (1) to (1,-0.35) to (-0.5,-0.35);
\draw[opacity=0.5] (1) node [shift={(0.3,0.5)}] {$\alpha_2$};

\draw[rounded corners,semithick,opacity=0.5] (0) to (0,-0.45) to (-0.5,-0.45);
\draw[opacity=0.5] (3) node [shift={(0.3,0.6)}] {$\alpha_1$};

\end{tikzpicture}
\hskip 1cm
\begin{tikzpicture}[scale=1.25,font=\normalsize] 

\draw[semithick] (0,1) to (3,1);

\node (a) at (0,1) [fill,circle,inner sep=1pt] {};
\node (b) at (1.5,1) [fill,circle,inner sep=1pt] {};
\node (c) at (3,1) [fill,circle,inner sep=1pt] {};

\draw (a) node [above] {$4$};
\draw (b) node [above] {$5$};
\draw (c) node [above] {$4$};

\draw[semithick] (0,-1) to (3,-1);

\foreach \x in {0,1,2,3} {
   \begin{scope}[shift={(\x cm, 0 cm)}]
    \node (\x) at (0,-1) [fill,circle,inner sep=1pt] {};
   \end{scope}
}
\draw (0) node [below] {$1$};
\draw (1) node [below] {$2$};
\draw (2) node [below] {$3$};
\draw (3) node [below] {$1$};

\draw[semithick,opacity=0.5] (0) to node [shift={(0.2,0)}] {$\beta$} (a);
\draw[semithick,opacity=0.5] (0) to node [shift={(-0.35,0)}] {$\gamma$} (c);
\draw[semithick,opacity=0.5] (2) to node [shift={(-0.2,0)}] {$\delta$} (c);
\draw[semithick,opacity=0.5] (3) to node [shift={(-0.2,0)}] {$\beta$} (c);

\draw[semithick,opacity=0.5,out=30,in=150] (0) to node [sloped,shift={(0.65,0.225)}] {$[1,3]$} (2);
\draw[semithick,opacity=0.5,out=-30,in=-150] (a) to node [sloped,shift={(0,0.27)}] {$[4,4]$} (c);

\end{tikzpicture}
\end{figure}

Triangulations involving asymptotic arcs were introduced in~\cite{bdupont}. 
Every triangulation of $A_{n,m}$ contains $n+m$ arcs (\cite[Corollary 1.8]{bdupont}). 
Triangulations do not need to involve asymptotic arcs. Since 
bridging arcs and  asymptotic arcs are never compatible, 
every triangulation either has bridging arcs or asymptotic arcs. 
We will say that a triangulation is {\em asymptotic} if it uses 
Pr\"ufer or adic arcs. 

\begin{rem}
We are mainly interested in matching numbers between the marked points and 
the triangles of a given  triangulation. If a triangulation contains arcs 
with large winding numbers, we can unwind these, as the numbers of triangles incident 
with a vertex are not affected by winding or unwinding all arcs of a triangulation 
simultaneously. So we can always choose triangulations to be ``simple'' in the sense that 
they avoid large winding numbers. 
\end{rem}

Let $T$ be a triangulation of $A_{n,m}$ and consider its preimage $\pi^{-1}(T)$ 
in $\U$. This is a triangulation of $\U$, consisting of repeated copies of 
a fundamental domain. 
For $1\le i\le  n$, 
we let $a_i$ be the number of triangles incident with vertex 
$((i-1)m,0)\in \U$ if $m>0$ and with $(i-1,0)$ if $m=0$.  
We call $(a_1,\dots, a_n)$ the {\em (outer) quiddity sequence} of $T$. 
This notion is motivated by the fact that the 
sequence $(a_1,\dots, a_n)$ gives rise to an infinite frieze (cf.\ Theorem~\ref{thm:annulus-frieze}). 

\begin{remark}\label{rem:subsequencs}
Note that the quiddity sequence $(a_1,\dots, a_n)$ of a triangulation might 
contain periodically repeating subsequences. Namely, there may exist a non-trivial factor 
$1<r<n$ of $n$, say $n=rs$, 
such that $(a_1,\dots, a_n)$ consists of 
$s$ successive copies of $(a_1,\dots, a_{r})$. 
\end{remark}

\begin{lm}\label{lm:S_n-annulus}
\begin{inparaenum}[$(i)$]
\item Let $T$ be a triangulation of $S_n$ with quiddity sequence $(a_1,\dots, a_n)$. 
Then for every $m\ge 0$, there exist 
triangulations of $A_{n,m}$ with 
quiddity sequence $(a_1,\dots, a_n)$.

\item Let $T$ be a 
triangulation of $A_{n,0}$ with quiddity 
sequence $(a_1,\dots, a_n)$. Then there exists a triangulation of $S_n$ with 
this quiddity sequence. 
\end{inparaenum}
\end{lm}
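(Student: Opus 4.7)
The plan is to exhibit explicit quiddity-preserving constructions between triangulations of $S_n$ and of $A_{n,m}$, exploiting that central arcs in $S_n$, asymptotic arcs in $A_{n,0}$, and bridging arcs in $A_{n,m}$ all share an outer endpoint at a boundary marked point, so matching outer arc-ends is the key to preserving the quiddity sequence.

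For part~(i) with $m=0$, given a triangulation $T$ of $S_n$, I would define $T'$ as the collection consisting of the peripheral arcs of $T$ together with, for each central arc of $T$ ending at outer vertex $i$, the Pr\"ufer arc $\pi_i$ (choosing one spiral direction uniformly across all outer vertices for compatibility). Using the earlier remark that triangulations may be chosen to avoid large winding numbers, each outer vertex has at most one central arc in $T$, so the assignment is well defined. Then $|T'|=n=n+m$ matches the required count from \cite[Corollary~1.8]{bdupont}; pairwise compatibility transfers from $T$ (peripheral--peripheral compatibility is unaffected by opening the puncture into an unmarked inner boundary, consistent spiraling makes Pr\"ufer arcs at distinct outer vertices mutually compatible, and Pr\"ufer--peripheral compatibility follows from central--peripheral in $S_n$); and the arc-end count at every outer vertex is preserved, so the quiddity is unchanged.

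For $m\ge 1$, I would extend the $m=0$ triangulation $T'$ of $A_{n,0}$ by placing the $m$ new marked points $n+1,\dots,n+m$ on the inner boundary close together, and locally modifying $T'$ in a neighborhood of the region where they are inserted. Concretely, one can promote a nearby asymptotic arc $\pi_i$ to the bridging arc $[i,n+1]$ (preserving the outer endpoint at $i$) and complete the triangulation by adding arcs drawn from: peripheral arcs $[n+1,n+k]$ on the inner boundary, inner loops, and (if necessary) inner asymptotic arcs at $n+1,\dots,n+m$ — none of which creates a new arc-end at any outer vertex. The arc-count equations in $A_{n,m}$, together with the fixed outer arc-end total $\sum(a_i-1)$, admit such a solution, so this local modification produces a triangulation of $A_{n,m}$ with the same outer quiddity $(a_1,\dots,a_n)$.

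Part~(ii) inverts the $m=0$ case of (i): since $A_{n,0}$ has no inner marked points, every arc of a triangulation $T$ is either peripheral or asymptotic, and defining $T'$ of $S_n$ by keeping peripheral arcs and replacing each asymptotic arc at $i$ with the central arc from $i$ to the puncture yields, by the same counting and compatibility arguments, a triangulation of $S_n$ with the same quiddity sequence. The main technical obstacle I anticipate is verifying compatibility between asymptotic arcs and peripheral loops that enclose the hole or puncture, and confirming that the local modification in the $m\ge 1$ extension can always be carried out — regardless of whether the relevant region of $T'$ is bounded by asymptotic arcs or by a peripheral loop around the hole — without introducing any extra outer arc-end.
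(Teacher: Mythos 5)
Your treatment of the $m=0$ case of (i) and of part (ii) is essentially the paper's argument: keep peripheral arcs, trade central arcs for consistently-oriented Pr\"ufer arcs (and back), and observe that no arc-end at an outer vertex is created or destroyed. That part is fine.

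The extension to $m\ge 1$, however, contains a genuine error. You propose to ``promote a nearby asymptotic arc $\pi_i$ to the bridging arc $[i,n+1]$'' while keeping the remaining Pr\"ufer arcs at the other outer vertices (and possibly even adding inner asymptotic arcs). But, as the paper points out when discussing triangulations of $A_{n,m}$, a bridging arc and an asymptotic arc are \emph{never} compatible: an asymptotic arc spirals around the meridian $z$ infinitely often, so any curve joining the two boundary components must cross it. Hence the collection you describe --- one bridging arc $[i,n+1]$ together with the surviving Pr\"ufer arcs $\pi_j$ at the other outer vertices --- is not a set of pairwise compatible arcs, and your ``local modification'' cannot be local: introducing a single bridging arc forces you to discard \emph{all} asymptotic arcs, after which preserving the outer quiddity is no longer automatic (indeed, converting asymptotic arcs to bridging arcs and re-completing is exactly the mechanism the paper later uses, in Proposition~\ref{prop:adding-one}, to \emph{increase} an entry of the quiddity sequence). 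The correct move, and the one the paper makes, is to introduce no bridging arcs at all: keep the $n$ outer arcs (peripheral plus Pr\"ufer) untouched and add $m$ further compatible arcs based entirely at the inner boundary --- peripheral arcs and asymptotic arcs at the inner marked points, spiralling so as to interleave with the outer Pr\"ufer arcs. This yields $n+m$ pairwise compatible arcs, an asymptotic triangulation of $A_{n,m}$, and since none of the added arcs meets the outer boundary, the outer quiddity sequence is unchanged.
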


\begin{figure}[H]
\begin{tikzpicture}[font=\normalsize] 

\draw (0) node [shift={(-1,-0.5)}] {$S_3$};

\node (a) at (0,0) [fill,circle,inner sep=1pt] {};
\draw (0,0) circle (1cm);
\foreach \x in {-120,0,120} {
   \begin{scope}[rotate=\x]
    \node (\x) at (0,-1) [fill,circle,inner sep=1pt] {};
   \end{scope}
}
\draw (120) node [above right] {$3$};
\draw (0) node [below] {$2$};
\draw (-120) node [above left] {$1$};

\draw[semithick,opacity=0.5] (a) to (120);
\draw[semithick,opacity=0.5] (a) to (-120);

\draw[semithick,opacity=0.5,out=180,in=-80] (0,-0.5) to  (-120);
\draw[semithick,opacity=0.5,out=-100,in=0] (120) to  (0,-0.5);

\end{tikzpicture}
\hskip .75cm
\begin{tikzpicture}[scale=0.9,font=\normalsize] 

\draw (4,1) node [above] {$A_{3,0}$};

\draw[semithick] (-0.5,1) to (3.5,1);

\draw[semithick,opacity=0.5,dashed] (-0.5,0) to node [shift={(2,0.2)}] {$z$} (3.5,0);

\draw[semithick] (-0.5,-1) to (3.5,-1);

\foreach \x in {0,1,2,3} {
   \begin{scope}[shift={(\x cm, 0 cm)}]
    \node (\x) at (0,-1) [fill,circle,inner sep=1pt] {};
   \end{scope}
}
\draw (0) node [below] {$1$};
\draw (1) node [below] {$2$};
\draw (2) node [below] {$3$};
\draw (3) node [below] {$1$};

\draw[semithick,opacity=0.5,out=30,in=150] (0) to (2);

\draw[rounded corners,semithick,opacity=0.5] (3) to  (3,-0.45) to (3.5,-0.45);
\draw[rounded corners,semithick,opacity=0.5] (2) to (2,-0.35) to (3.5,-0.35);
\draw[rounded corners,semithick,opacity=0.5] (0) to (0,-0.25) to (3.5,-0.25);

\end{tikzpicture}
\hskip .5cm
\begin{tikzpicture}[scale=0.9,font=\normalsize] 

\draw (4,1) node [above] {$A_{3,1}$};

\draw[semithick,opacity=0.5,dashed] (-0.5,0) to node [shift={(2,0.2)}] {$z$} (3.5,0);

\draw[semithick] (-0.5,1) to (3.5,1);

\node (a) at (0,1) [fill,circle,inner sep=1pt] {};
\node (b) at (3,1) [fill,circle,inner sep=1pt] {};

\draw (a) node [above] {$4$};
\draw (b) node [above] {$4$};

\draw[rounded corners,semithick,opacity=0.5] (a) to  (0,0.35) to (-0.5,0.35);
\draw[rounded corners,semithick,opacity=0.5] (b) to (3,0.25) to (-0.5,0.25);

\draw[semithick] (-0.5,-1) to (3.5,-1);

\foreach \x in {0,1,2,3} {
   \begin{scope}[shift={(\x cm, 0 cm)}]
    \node (\x) at (0,-1) [fill,circle,inner sep=1pt] {};
   \end{scope}
}
\draw (0) node [below] {$1$};
\draw (1) node [below] {$2$};
\draw (2) node [below] {$3$};
\draw (3) node [below] {$1$};

\draw[semithick,opacity=0.5,out=30,in=150] (0) to (2);

\draw[rounded corners,semithick,opacity=0.5] (3) to  (3,-0.45) to (3.5,-0.45);
\draw[rounded corners,semithick,opacity=0.5] (2) to (2,-0.35) to (3.5,-0.35);
\draw[rounded corners,semithick,opacity=0.5] (0) to (0,-0.25) to (3.5,-0.25);

\end{tikzpicture}
\end{figure}

\begin{proof}
\begin{inparaenum}[(i)] 
\item To go from triangulations of $S_n$ to asymptotic triangulations of $A_{n,m}$, every peripheral 
arc of $S_n$ is kept while every central arc is replaced by a Pr\"ufer arc (or by an adic arc)
based at the outer boundary. 
The result of this is a collection of $n$ compatible arcs based at the outer boundary. 
For $m=0$, this is a triangulation of 
$A_{n,0}$. 
In case $m>0$, to obtain a triangulation of $A_{n,m}$, one adds $m$ compatible
(peripheral and asymptotic) arcs based at the inner boundary (thus creating an
asymptotic triangulation at the inner boundary) to the set of arcs obtained as above. 
There are many choices for this. 

\item One keeps peripheral arcs, 
while replacing every asymptotic arc of $T$ by a central arc based at the corresponding
marked point of $S_n$. 
\end{inparaenum}
\end{proof}

By Lemma~\ref{lm:S_n-annulus}, every quiddity sequence of $A_{n,0}$ gives rise 
to an (periodic) infinite frieze. Since triangulations of $S_n$ can be viewed 
as triangulations of $A_{n,0}$, we may work in $\U=\U(n,0)$ to determine the matching 
numbers $a_i$. 

\begin{theorem}\label{thm:annulus-frieze}
Let $T$ be a triangulation of $A_{n,m}$ with quiddity sequence $(a_1,\dots, a_n)$. 
Then $(a_1,\dots, a_n)$ defines a periodic infinite frieze. 
\end{theorem}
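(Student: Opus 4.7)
The plan is to extend the given quiddity to a bi-infinite $n$-periodic sequence $(a_i)_{i\in\mathbb{Z}}$ and to realize the accompanying frieze entries as matching numbers in the lift of $T$ to the universal cover. Since matching numbers are automatically non-negative integers, the bulk of the work becomes combinatorial: establishing the frieze recursion and the existence of at least one matching.

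First I would lift $T$ to the triangulation $\widetilde T:=\pi^{-1}(T)$ of $\U=\U(n,m)$ and index the outer marked points as $v_i=((i-1)m,0)$ for $i\in\mathbb{Z}$ (respectively $v_i=(i-1,0)$ if $m=0$), so that $a_i$ is the number of triangles of $\widetilde T$ locally at $v_i$. Invariance of $\widetilde T$ under the deck translation gives $a_{i+n}=a_i$, and will automatically yield $n$-periodicity of the frieze to be constructed.

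For each pair $i\le j+1$ I would define $\widetilde m_{ij}$ as the number of injective assignments $\varphi\colon\{v_i,\ldots,v_j\}\to\{\text{triangles of }\widetilde T\}$ with $v_k$ a vertex of $\varphi(v_k)$ for every $k$, with the conventions $\widetilde m_{i,i-1}=1$ and $\widetilde m_{i,i-2}=0$. Then $\widetilde m_{ii}=a_i$, and the combinatorial heart of the proof is the identity
\[
\widetilde m_{ij}+\widetilde m_{i,j-2}=a_j\,\widetilde m_{i,j-1}.
\]
One proves this by sorting the matchings counted on the right according to whether the triangle assigned to $v_j$ is disjoint from the matching of $v_i,\ldots,v_{j-1}$, and pairing off the conflicting matchings through a Broline--Crowe--Isaacs-style involution that swaps overlapping triangle assignments. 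Combined with the initial conditions, this identifies $\widetilde m_{ij}$ with the array produced from $(a_i)$ by the recursion \eqref{friezerelationb}, so in particular every $m_{ij}$ is a non-negative integer; strict positivity follows by constructing one explicit matching, built inductively by assigning a fresh incident triangle of $\widetilde T$ to each $v_k$ in turn. The unimodular rule then follows from the determinant formula \eqref{friezerelationa} via the Desnanot--Jacobi identity.

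The main obstacle is the involutive pairing itself, since $\widetilde T$ may contain bridging and asymptotic arcs and so the triangles incident with outer vertices are more varied than in the polygon case of \cite{bci} or the punctured-disc case of \cite[Theorem~5.21]{tschabold}. Because only the finitely many triangles meeting the segment $v_i,\ldots,v_j$ actually participate in the count, however, the analysis is local to the outer boundary and should proceed in close parallel with the classical argument once one enumerates the possible configurations in which two boundary triangles can share a vertex.
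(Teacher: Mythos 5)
Your route is genuinely different from the paper's. The paper proves this theorem by reduction: when $T$ is asymptotic, the arcs based at the outer boundary form a triangulation of $A_{n,0}$ and the claim follows from Theorem~\ref{thm:triangul-frieze-Sn} together with Lemma~\ref{lm:S_n-annulus}; when $T$ has bridging arcs, it replaces the bridging arcs at each outer marked point by a single adic arc, obtaining an asymptotic triangulation whose quiddity entries satisfy $b_i\le a_i$, and then recovers $(a_i)$ by applying Theorem~\ref{thm:modifyquiddityrow}. No matching numbers are needed at this stage; the matching interpretation appears only in Section~\ref{sec:final-matchings} (Proposition~\ref{prop:entries-matching}, Theorem~\ref{thm:arbitrary-matchings}), and there it is derived \emph{after} the frieze property is already known, by comparing $\abs{\mathcal{M}_{ij}}$ with the known frieze entries via the identity of Lemma~\ref{lm:matchingrelation} and an induction on peripheral arcs. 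Your plan inverts this logic, which would give a self-contained argument---but only if its central step is actually carried out.

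That step is the identity $\widetilde m_{ij}+\widetilde m_{i,j-2}=a_j\,\widetilde m_{i,j-1}$, and this is where the gap lies. Splitting the $a_j\widetilde m_{i,j-1}$ pairs $(\varphi,\Delta)$ according to whether $\Delta$ lies in the image of $\varphi$ does identify the non-conflicting pairs with $\mathcal{M}_{ij}$, but the asserted bijection between conflicting pairs and $\mathcal{M}_{i,j-2}$ is precisely the content of the theorem, and you offer no construction of it. There is no evident local involution: if $\Delta=\varphi(v_k)$, the index $k$ may be far from $j$ (long peripheral arcs, or unbounded ``triangles'' with a vertex at $\pm\infty$ arising from asymptotic arcs), and neither deleting $v_k$ nor shifting the assignment yields a valid element of $\mathcal{M}_{i,j-2}$, since $\varphi(v_{l+1})$ need not be incident with $v_l$. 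The classical treatments you invoke do not prove this recursion by such a pairing either---\cite{bci} and \cite{tschabold} proceed by ear-removal inductions, and this paper by the different decomposition of Lemma~\ref{lm:matchingrelation}---so ``close parallel with the classical argument'' does not point to an existing proof. Separately, strict positivity requires the existence of at least one matching, and the greedy ``fresh triangle at each $v_k$'' can get stuck (e.g.\ if $a_k=1$ and the unique triangle at $v_k$ was already consumed by an earlier vertex it shares a peripheral arc with), so a Hall-type or carefully ordered selection argument is also missing. As it stands the proposal is a plausible plan, not a proof.
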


\begin{proof} 
If the triangulation is asymptotic, we can 
consider the arcs based at the outer boundary separately. They form 
a triangulation of $A_{n,0}$ and the claim follows from Theorem \ref{thm:triangul-frieze-Sn} and 
Lemma~\ref{lm:S_n-annulus}.

Assume that the triangulation $T$ of $A_{n,m}$ does not contain any 
asymptotic arcs. 
Hence it contains at least $2$ bridging arcs 
(\cite[Lemma 1.7]{bdupont}).
We associate an asymptotic 
triangulation $T'$ to $T$ as follows. 
The arcs in $T'$ are the peripheral arcs of $T$ together with 
the adic arc $\alpha_i$ for every marked point $i$ of $A_{n,m}$ incident with a bridging arc 
of $T$, as illustrated in the figure. 
\begin{figure}[H]
\begin{tikzpicture}[scale=1,font=\normalsize] 

\draw[semithick,dashed] (0.25,-1) to (0.25,1);
\draw[semithick,dashed] (4.5,-1) to (4.5,1);

\draw[semithick] (0.25,1) to (4.5,1);

\node (a) at (0.75,1) [fill,circle,inner sep=1pt] {};
\node (b) at (1.75,1) [fill,circle,inner sep=1pt] {};
\node (c) at (2.75,1) [fill,circle,inner sep=1pt] {};
\node (d) at (3.75,1) [fill,circle,inner sep=1pt] {};

\draw[semithick] (0.25,-1) to (4.5,-1);

\foreach \x in {2,3,4} {
   \begin{scope}[shift={(\x cm, 0 cm)}]
    \node (\x) at (0,-1) [fill,circle,inner sep=1pt] {};
   \end{scope}
}

\draw (2) node [below] {$i$};

\draw[semithick,opacity=0.5] (2) to node [shift={(-0.25,0)}] {$\gamma_1$} (a);
\draw[semithick,opacity=0.5] (2) to node [shift={(-0.25,0)}] {$\gamma_2$}  (c);
\draw[semithick,opacity=0.5] (2) to node [shift={(0.35,0)}] {$\gamma_3$} (d);
\draw[semithick,opacity=0.5,out=30,in=150] (2) to (4);
\draw[semithick,opacity=0.5,out=-30,in=-150] (a) to (c);

\end{tikzpicture}
\hskip 1cm
\begin{tikzpicture}[scale=1,font=\normalsize] 

\draw[semithick,dashed] (0.25,-1) to (0.25,1);
\draw[semithick,dashed] (4.5,-1) to (4.5,1);

\draw[semithick,opacity=0.5,dashed] (0.25,0) to (4.5,0);

\draw[semithick] (0.25,1) to (4.5,1);

\node (a) at (0.75,1) [fill,circle,inner sep=1pt] {};
\node (b) at (1.75,1) [fill,circle,inner sep=1pt] {};
\node (c) at (2.75,1) [fill,circle,inner sep=1pt] {};
\node (d) at (3.75,1) [fill,circle,inner sep=1pt] {};

\draw[semithick,opacity=0.5,out=-30,in=-150] (a) to (c);
\draw[rounded corners,semithick,opacity=0.5] (a) to  (0.75,0.25) to (4.5,0.25);
\draw[rounded corners,semithick,opacity=0.5] (c) to (2.75,0.35) to (4.5,0.35);
\draw[rounded corners,semithick,opacity=0.5] (d) to (3.75,0.45) to (4.5,0.45);

\draw[semithick] (0.25,-1) to (4.5,-1);

\foreach \x in {2,3,4} {
   \begin{scope}[shift={(\x cm, 0 cm)}]
    \node (\x) at (0,-1) [fill,circle,inner sep=1pt] {};
   \end{scope}
}

\draw (2) node [below] {$i$};

\draw[rounded corners,semithick,opacity=0.5] (2) to (2,-0.25) to (0.25,-0.25);
\draw[opacity=0.5] (2) node [shift={(0.3,0.6)}] {$\alpha_i$};

\draw[semithick,opacity=0.5,out=30,in=150] (2) to (4);

\end{tikzpicture}
\end{figure}
\noindent
This induces a triangulation of $A_{n,0}$. 
Let $(b_1,\dots, b_n)$ be its quiddity sequence. It defines 
a periodic infinite frieze 
(Theorem~\ref{thm:triangul-frieze-Sn}, Lemma~\ref{lm:S_n-annulus}). 
We claim 
that $b_i\le a_i$ holds for all $i$. 
If $i$ is a marked point not incident with any bridging arc of $T$, then 
$b_i=a_i$. If $i$ is incident with $r_i$ bridging arcs of $T$, then in $T'$, 
$i$ is incident with one adic arc. Hence $b_i=a_i - (r_i-1)\le a_i$. 
The claim then follows using Theorem~\ref{thm:modifyquiddityrow}. 
\end{proof}

Theorem~\ref{thm:annulus-frieze} motivates the following definition: 

\begin{defn}\label{def:realizable}
Let $(a_1,\dots, a_n)$ be the quiddity sequence of a periodic frieze. 
We say that $(a_1,\dots, a_n)$ can be {\em realized (in an annulus)} 
if there is some $m\ge0$ and a triangulation $T$ of $A_{n,m}$ such that 
$(a_1,\dots, a_n)$ is the quiddity sequence of $T$. 
\end{defn}

%
\section{Classification of periodic friezes}\label{sec:classification}
%

The main goal of this section is to prove the following claim:
Let $(a_1,\dots, a_n)$ be the quiddity sequence of an $n$-periodic frieze $\mathcal F$. 
Then either $\mathcal F$ is a finite frieze and hence $(a_1,\dots, a_n)$ is the quiddity 
sequence of a triangulated polygon (\cite{coco1,coco2}) or $\mathcal F$ is infinite 
and we can realize $(a_1,\dots, a_n)$ in an annulus.

Observe that in the former 
case, up to rotating triangulations, 
there is only one triangulated polygon giving rise to $(a_1,\dots, a_n)$ and this 
polygon may have more than $n$ vertices 
(see (a), (b) and (c) of Example~\ref{ex:small-period} below). 
Whereas, in the latter case, we 
can realize $(a_1,\dots, a_n)$ in an annulus with $s\cdot n$ marked points on the outer boundary 
for every 
$s\ge 1$ (Theorem~\ref{thm:periodic-combinatorial} and Lemma~\ref{lm:period-multiply}). 

\begin{ex}\label{ex:small-period}
Let $\mathcal F$ be the $n$-periodic frieze with quiddity sequence $(a_1,\dots, a_n)$. 

\begin{inparaenum}[(a)]
\item If $n=1$ and the quiddity sequence is $(1)$, then 
$\mathcal F$ is finite and arises from the trivial triangulation of a triangle.

\item For $n=2$ with quiddity sequence $(1,2)$, the frieze is finite and arises 
from a triangulation of a square.

\item If $n=2$ and the quiddity sequence is $(1,3)$,  
$\mathcal F$ is finite and arises from a triangulation of a hexagon by an inner triangle.

\item If $\mathcal F$ is the $2$-periodic frieze with quiddity sequence $(1,4)$, then 
$\mathcal F$ is infinite and arises from a triangulation of $A_{2,0}$ with a loop and one asymptotic 
arc. This is equivalent to the sequence obtained from a triangulation of $S_2$ with one 
loop and one central arc.

\item If $\mathcal F$ is a $2$-periodic frieze with quiddity sequence $(1,a)$ with $a\ge 5$, then 
the frieze is infinite and arises from a triangulation of $A_{2,n-4}$ with one peripheral 
arc (a loop) and $a-3$ bridging arcs.
\end{inparaenum}

\end{ex}

We first note that if an infinite frieze arises from a triangulation 
of an annulus, it arises from infinitely many triangulations of different annuli: 

\begin{lm}\label{lm:period-multiply}
Let $(a_1,\dots, a_n)$ be the quiddity sequence of a triangulation of $A_{n,m}$. Then 
for every $s\ge 1$, there exists a triangulation of $A_{sn,sm}$ with quiddity sequence 
$$
(\underbrace{a_1,\dots, a_n}_{\tiny\mbox{first copy}},
\underbrace{a_1,\dots, a_n}_{\tiny\mbox{second}},\dots,
\underbrace{a_1,\dots, a_n}_{\tiny\mbox{$s^{th}$ copy}}).
$$ 
\end{lm}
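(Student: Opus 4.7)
The plan is to pull the given triangulation back to the universal cover and then push it forward to the universal cover of $A_{sn,sm}$ via a horizontal stretch, checking that the result descends. First I would form the lift $\tilde T=\pi_{n,m}^{-1}(T)$, a triangulation of the strip $\U(n,m)$ which is invariant under its deck translation, namely $(nm,0)$ if $m>0$ and $(n,0)$ if $m=0$.

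For $m>0$, the key observation is that the horizontal stretch $\sigma:\mathbb{R}^{2}\to\mathbb{R}^{2}$, $\sigma(x,y)=(sx,y)$, is a homeomorphism of the strip that sends the marked points $\{(km,0)\}\cup\{(kn,1)\}$ of $\U(n,m)$ bijectively onto the marked points $\{(ksm,0)\}\cup\{(ksn,1)\}$ of $\U(sn,sm)$. Consequently $\sigma(\tilde T)$ is a collection of pairwise non-crossing arcs subdividing the strip into triangles, i.e.\ a triangulation of $\U(sn,sm)$. Since $\tilde T$ is $nm$-periodic, $\sigma(\tilde T)$ is $snm$-periodic, and as $snm$ divides the deck translation $s^{2}nm$ of $\U(sn,sm)$, the triangulation $\sigma(\tilde T)$ descends to a triangulation $T'$ of $A_{sn,sm}$. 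The case $m=0$ requires no stretching: the universal covers $\U(n,0)$ and $\U(sn,0)$ coincide as strips with marked points, and any $n$-periodic triangulation is a fortiori $sn$-periodic, hence descends directly to a triangulation $T'$ of $A_{sn,0}$.

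Finally I would read off the quiddity sequence of $T'$. The $i$-th outer vertex of $A_{sn,sm}$ corresponds to $((i-1)sm,0)\in\U(sn,sm)$ (or $(i-1,0)$ when $m=0$). Writing $i-1=kn+(j-1)$ with $0\le k<s$ and $1\le j\le n$, this point is $\sigma$ (or the identity, if $m=0$) applied to the deck-translate $((j-1)m+knm,0)$ of $((j-1)m,0)\in\U(n,m)$. Since $\sigma$ is a homeomorphism and $\tilde T$ is $nm$-periodic, the number of triangles of $T'$ incident with the $i$-th vertex equals the number of triangles of $\tilde T$ incident with $((j-1)m,0)$, which is $a_j$. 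Concatenating over $i=1,\dots,sn$ gives the claimed $s$-fold repetition of $(a_1,\dots,a_n)$. The argument is essentially bookkeeping; the only mild subtlety is to treat the boundary case $m=0$ separately to avoid meaningless expressions involving division by $m$, but the strategy is otherwise uniform and uses nothing beyond the covering-space setup already established in this section.
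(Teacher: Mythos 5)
Your proof is correct and follows essentially the same route as the paper: both lift $T$ to the universal cover and exploit that the $nm$-periodic lifted triangulation is a fortiori periodic with the period required for $A_{sn,sm}$ (the paper phrases this as taking $s$ adjacent copies of a fundamental domain, while your stretch map $\sigma$ is just a coordinate normalization of the same idea). The bookkeeping of the quiddity sequence at the end is also the same.
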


\begin{proof}
Let $T$ be a triangulation of $A_{n,m}$ and consider its 
preimage $\pi^{-1}(T)$ in $\U$. 
We take a fundamental domain for $A_{n,m}$ and $s-1$ further copies of 
it to the right. This large rectangle can be considered as a fundamental domain 
for the annulus $A_{sn,sm}$.
\end{proof}

When determining whether a periodic infinite frieze with quiddity 
sequence $(a_i)_{1\le i\le n}$ is 
realizable in an annulus, it is sufficient to find a realization for 
any periodic subsequence $(a_1,\dots, a_r)$ of it (Remark~\ref{rem:subsequencs}). 
By Lemma~\ref{lm:period-multiply}, $(a_1,\dots, a_n)$ is then also realizable. 
It is thus enough to consider 
the shortest period $1\le r\le n$ of the frieze and to show that the 
sequence $(a_1,\dots, a_r)$ is realizable. 

\medskip

We next show how increasing entries in a quiddity sequence coming from a triangulation of 
an annulus gives rise to new realizable quiddity sequences. 
This can be considered as a combinatorial analogue of Theorem~\ref{thm:modifyquiddityrow}. 

\begin{prop}\label{prop:adding-one}
Let $(a_1,\dots, a_n)$ be realizable in $A_{n,m}$, $n>0$, $m\ge 0$. 
Let $1\le j\le n$ and consider the sequence 
$(\tilde{a}_1,\dots, \tilde{a}_n)$ 
$$
\mbox{ with $\tilde{a}_i:=$} 
     \left\{
     \begin{array}{ll}
     a_i & \mbox{if $i\ne j$}, \\
     a_i+1 & \mbox{if $i=j$}.     
     \end{array}
     \right.
$$
Then there exists $m'\ge m$ such that the sequence $(\tilde{a}_i)_i$ 
is realizable in $A_{n,m'}$. 
\end{prop}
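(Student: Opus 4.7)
The plan is to construct, from a given triangulation $T$ of $A_{n,m}$ realising $(a_i)_i$, a triangulation $T'$ of $A_{n,m+1}$ realising $(\tilde a_i)_i$, so that $m'=m+1$ (we may need $m'=m+2$ when $m=0$, see below). Since a triangulation of $A_{n,m+1}$ has exactly one more arc than a triangulation of $A_{n,m}$, the basic idea is to introduce one new marked point $*$ on the inner boundary together with a local modification of $T$ near $*$ so that only the matching number at $j$ goes up, by one. This is the geometric companion to Theorem~\ref{thm:modifyquiddityrow} with $k=j$ and $b=1$.

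The warm-up case is when there is a triangle $\Delta$ of $T$ having $j$ as a vertex and whose opposite edge is an inner boundary segment $s=(u,v)$. Then I would place $*$ on $s$, splitting it into $(u,*)$ and $(*,v)$, and add the arc $\gamma=(j,*)$ routed through the interior of $\Delta$. As $\gamma$ stays inside $\Delta$ it is compatible with every arc of $T$, and $\Delta$ splits into the two triangles $(j,u,*)$ and $(j,*,v)$. A local matching count shows that $a_j$ increases by $1$, the matching numbers at $u$ and $v$ (both inner) are preserved, and all other matching numbers are untouched; hence the outer quiddity sequence of $T'$ is $(\tilde a_i)_i$.

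When no such $\Delta$ is available at $j$, I would enlarge the picture. Starting from some triangle $\Delta_j \in T$ incident to $j$, I walk across shared arcs of $T$ to adjacent triangles along a shortest dual-graph path to a triangle $\Delta_s$ having an inner boundary segment $s$ as an edge; the union $P$ of the triangles met by the walk is then a simply connected polygon with $j$ as a vertex and $s$ on its boundary. In the degenerate case $m=0$, there are no inner boundary segments, so I first invoke Lemma~\ref{lm:S_n-annulus} to realise $(a_i)_i$ by a triangulation in $A_{n,1}$, increasing $m'$ by at most one more. Next I would place $*$ on $s$ to turn $P$ into a polygon $P'$ with one extra vertex, and re-triangulate $P'$ using one extra triangle, subject to the requirements that (i) the triangle count at $j$ rises by $1$, (ii) the triangle count at every other outer vertex appearing in $P$ is preserved, and (iii) the arcs of $T$ lying on $\partial P$ are retained. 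These conditions let the new triangulation of $P'$ glue to $T$ outside $P$ to produce the desired $T'$.

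The main obstacle is producing such a re-triangulation of $P'$ in general. A degree count over $P'$ forces the total change in triangle counts to equal $+3$; imposing (i)--(ii) leaves exactly $2-d_*$ to be absorbed by the inner vertices of $P$, where $d_*$ is the triangle-degree of $*$ in $P'$. The simplest case, $P$ a quadrilateral, is handled by the fan from $*$ to the two vertices of $P'$ opposite $s$. In general I would proceed by induction on the number of triangles of $P$: a flip of an interior arc of $P$ shortens the walk by one while leaving the matching numbers on $\partial P$ unchanged, so after finitely many flips we reduce to the quadrilateral case. Verifying carefully that the residual inner-vertex changes can always be accommodated, and that all new arcs inside $P'$ are pairwise compatible and stay within $P'$, is where the argument demands the most care.
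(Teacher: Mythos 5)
Your warm-up case is sound (it is one of the sub-cases of the paper's argument), but the general case has genuine gaps. The most serious is the commitment to a single new inner vertex $*$: the required re-triangulation of $P'$ need not exist. Take consecutive outer vertices $j-1,j,j+1$ and adjacent inner vertices $w_1,w_2$, and let $P$ be the pentagon $j-1,j,j+1,w_2,w_1$ triangulated by the peripheral arc $[j-1,j+1]$ and the bridging arc from $j-1$ to $w_2$, so the triangle counts along the bottom of $P$ are $3,1,2$; this occurs inside honest triangulations, e.g.\ of $A_{3,2}$ with arcs $[1,3],[1,4],[1,5],[3,5],[3,4]$ and $j=2$. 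Your walk produces exactly this $P$, and inserting one vertex $*$ into the inner segment between $w_2$ and $w_1$ gives a hexagon in which $j$ must carry exactly one diagonal, ending at $w_2$, $*$ or $w_1$; each choice forces the triangle count at $j-1$ or at $j+1$ to be wrong (for instance $(j,w_2)$ forces both remaining diagonals to emanate from $j-1$, leaving $j+1$ with count $1$ instead of $2$). Two new vertices do suffice here, which is consistent with the proposition but not with your reduction. The flip-induction cannot repair this: flipping an interior arc of $P$ changes the triangle count at all four corners of the relevant quadrilateral, and these corners lie on $\partial P$ and are typically outer vertices, so the claim that flips leave the matching numbers on $\partial P$ unchanged is false, and after flipping, condition (ii) is already violated.

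A second gap is that the dual-graph walk does not exist for asymptotic triangulations with $m>0$. Such a triangulation has no bridging arcs; its asymptotic arcs spiral into the meridian, so no finite chain of adjacent triangles leads from a triangle at $j$ to one having an inner boundary segment as an edge. Your workaround via Lemma~\ref{lm:S_n-annulus} is invoked only for $m=0$ and does not cover this situation, where inner boundary segments exist but are unreachable. For comparison, the paper argues by direct case analysis: if $j$ carries a bridging or asymptotic arc, a single local move suffices (a flip of an inner peripheral arc, insertion of one new inner vertex, or, in the asymptotic case, replacement of all asymptotic arcs by bridging arcs to one new inner vertex); if $j$ carries only peripheral arcs, it replaces every peripheral arc lying over $j$ by a pair of bridging arcs to new inner vertices, allowing as many new vertices as needed rather than exactly one.
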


\begin{proof}
Since $(a_i)_i$ is realizable in $A_{n,m}$ we pick a triangulation $T$ with 
quiddity sequence $(a_1,\dots, a_n)$. 
We distinguish two cases: 
\begin{inparaenum}[(A)]
\item there are no bridging and no asymptotic 
arcs at the marked point $j$ on the outer boundary, and 
\item there is a bridging or an asymptotic arc at $j$. 
\end{inparaenum}

\begin{inparaenum}[\text{In case} (A),]
\item there are $k$ peripheral arcs lying above $j$. 
We work in a fundamental domain of $\U$ with triangulation obtained from $\pi^{-1}(T)$.
We write these $k$ arcs as $[m_{i},l_{i}]$, $i=1,\dots,k$, where $m_{i},l_{i}$ are vertices on the lower boundary of $\U$ with 
\begin{equation} \label{eq:endpoints}
m_k\le m_{k-1} \le \dots\le m_1<j< l_1\le l_2\le\dots \le l_k,
\end{equation}
also using $j$ to denote the vertex $(j-1,0)$ in $\U$. 
Observe that we can never simultaneously have $m_{i+1}=m_i$ and $l_{i+1}=l_i$.

Denote the leftmost bridging arc at $m_k$ by $\alpha$ and the rightmost bridging arc at $l_k$ by $\beta$ 
(possibly, $\alpha=\beta$). 
Consider the region $P(j)$ bounded by $\alpha$ and $\beta$ (which contains the $k$ peripheral arcs above $j$).
This region is a polygon inside a fundamental domain for $A_{n,m}$. 

\begin{figure}[H]
\begin{tikzpicture}[scale=1,font=\normalsize] 

\draw (-0.5,0) node [shift={(-0.25,0)}] {$\dots$};
\draw (3.5,0) node [shift={(0.25,0)}] {$\dots$};

\draw[semithick] (-0.5,1) to (3.5,1);

\node (a) at (0,1) [fill,circle,inner sep=1pt] {};
\node (b) at (3,1) [fill,circle,inner sep=1pt] {};

\draw[semithick] (-0.5,-1) to (3.5,-1);

\foreach \x in {0,1,2,3} {
   \begin{scope}[shift={(\x cm, 0 cm)}]
    \node (\x) at (0,-1) [fill,circle,inner sep=1pt] {};
   \end{scope}
}

\draw (0) node [below,shift={(0,-0.095)}] {$m_2$};
\draw (1) node [below,shift={(0,-0.095)}] {$m_1$};
\draw (2) node [below] {$j$};
\draw (3) node [below] {$l_1=l_2$};

\draw[semithick,opacity=0.5,out=30,in=150] (1) to (3);
\draw[semithick,opacity=0.5,out=45,in=135] (0) to (3);
\draw[semithick,opacity=0.5] (0) to node [shift={(0.2,0)}] {$\alpha$} (a);
\draw[semithick,opacity=0.5] (3) to node  [shift={(-0.2,0)}] {$\beta$}  (b);

\end{tikzpicture}
\hskip 1cm
\begin{tikzpicture}[scale=1,font=\normalsize] 

\draw (-0.5,0) node  [shift={(-0.25,0)}] {$\dots$};
\draw (3.5,0) node [shift={(0.25,0)}] {$\dots$};

\draw[semithick] (-0.5,1) to (3.5,1);

\node (a) at (0,1) [fill,circle,inner sep=1pt] {};
\node (b) at (1.75,1) [fill,circle,inner sep=1pt] {};
\node (c) at (2.25,1) [fill,circle,inner sep=1pt] {};
\node (d) at (3,1) [fill,circle,inner sep=1pt] {};

\draw (b) node [above] {$v_1$};
\draw (c) node [above] {$v_2$};

\draw[semithick] (-0.5,-1) to (3.5,-1);

\foreach \x in {0,1,2,3} {
   \begin{scope}[shift={(\x cm, 0 cm)}]
    \node (\x) at (0,-1) [fill,circle,inner sep=1pt] {};
   \end{scope}
}

\draw (0) node [below,shift={(0,-0.095)}] {$m_2$};
\draw (1) node [below,shift={(0,-0.095)}] {$m_1$};
\draw (2) node [below] {$j$};
\draw (3) node [below] {$l_1=l_2$};

\draw[semithick,opacity=0.5] (0) to node [shift={(0.2,0)}] {$\alpha$} (a);
\draw[semithick,opacity=0.5] (3) to node  [shift={(-0.2,0)}] {$\beta$}  (d);

\draw[semithick,opacity=0.5] (0) to (b);
\draw[semithick,opacity=0.5] (1) to (b);
\draw[semithick,opacity=0.5] (2) to (b);
\draw[semithick,opacity=0.5] (3) to (b);
\draw[semithick,opacity=0.5] (3) to (c);

\end{tikzpicture}
\end{figure}

We now replace each peripheral arc above $j$ in $P(j)$ by a pair of bridging arcs,
and keep all other arcs. To do this, 
we need to increase the number of vertices on the inner boundary.  

We add a vertex 
$v_1$ above $j$. This changes the number of marked points, but we still call the region 
$P(j)$. 
In $P(j)$, we draw three (bridging) arcs connecting 
$m_1, j$ and $l_1$ with the new vertex. 

If $m_2>m_1$ and $l_2<l_1$, we connect $m_2$ and $l_2$ with $v_1$. 
If $m_2=m_1$ (resp.\ $l_2=l_1$), we draw a second vertex $v_2$ to the left (resp.\ right) of $v_1$,
and connect $m_2$ (resp.\ $l_2$) instead with $v_2$. 
We continue in this way until we have replaced the $k$ peripheral arcs above $j$ 
by $2k$ bridging arcs. Let $m'$ be the new number of vertices on the inner boundary of 
the fundamental domain.  
The number of triangles incident with the vertices $m_k,\dots, m_1$ and $l_1,\dots, l_k$ 
remains unchanged. The only change is at vertex $j$, where we have inserted a bridging 
arc, hence increasing the number of incident triangles by one. 
The resulting triangulation of $A_{n,m'}$ has the desired quiddity sequence. 

\medskip

\item[Case (B):]
\begin{inparaenum}[(i)]
\item If there are at least two bridging arcs incident with $j$, we consider a triangle based at 
$j$ formed by two bridging arcs $\alpha$ and $\beta$ 
at $j$ and a peripheral arc $\gamma$ or boundary segment $[r,r+1]$ on the 
inner boundary, $n+1\le r$, $r+1\le n+m$. 
In the former situation, we work in $A_{n,m}$ and 
replace the peripheral arc $\gamma$ by its flip: 
$\gamma$ 
is a diagonal in a quadrilateral formed by the two bridging arcs $\alpha$, $\beta$ 
and by two 
peripheral arcs or boundary segments. The flip replaces $\gamma$ by the other 
diagonal $\gamma'$ in the quadrilateral. 
Whereas, in the latter situation, we insert a new vertex $v_1$ between 
$r$ and $r+1$ and thus work in $A_{n,m+1}$. We 
add a new bridging arc connecting $j$ and $v_1$ inside the 
quadrilateral 
region formed by $\alpha$ and $\beta$ and the two boundary segments. 
In both cases, there are now $a_j+1$ triangles incident with $j$. 

\begin{figure}[H]
\begin{tikzpicture}[scale=1,font=\normalsize] 

\draw[semithick] (-0.25,1) to (2.25,1);

\node (a) at (0.25,1) [fill,circle,inner sep=1pt] {};
\node (b) at (1,1) [fill,circle,inner sep=1pt] {};
\node (c) at (1.75,1) [fill,circle,inner sep=1pt] {};

\draw[semithick] (-0.25,-1) to (2.25,-1);
\node (j) at (1,-1) [fill,circle,inner sep=1pt] {};
\draw (j) node [below] {$j$};

\draw[semithick,opacity=0.5,out=-30,in=-150] (a) to node [shift={(0.2,-0.2)}] {$\gamma$} (c);
\draw[semithick,opacity=0.5] (j) to node [shift={(-0.25,0)}] {$\alpha$}  (a);
\draw[semithick,red,opacity=0.5] (j) to node [shift={(-0.19,0.2)}] {$\gamma'$} (b);
\draw[semithick,opacity=0.5] (j) to node [shift={(0.2,0)}] {$\beta$}  (c);

\end{tikzpicture}
\hskip 1cm
\begin{tikzpicture}[scale=1,font=\normalsize] 

\draw[semithick] (-0.25,1) to (2.25,1);

\node (a) at (0.25,1) [fill,circle,inner sep=1pt] {};
\node (b) at (1,1) [red,fill,circle,inner sep=1pt] {};
\node (c) at (1.75,1) [fill,circle,inner sep=1pt] {};

\draw (a) node [above] {$r$};
\draw[red] (b) node [above] {$v_1$};
\draw (c) node [above] {$r\!+\!1$};

\draw[semithick] (-0.25,-1) to (2.25,-1);

\node (j) at (1,-1) [fill,circle,inner sep=1pt] {};
\draw (j) node [below] {$j$};

\draw[semithick,opacity=0.5] (j) to node [shift={(-0.2,0)}] {$\alpha$}  (a);
\draw[semithick,red,opacity=0.5] (j) to (b);
\draw[semithick,opacity=0.5] (j) to node [shift={(0.2,0)}] {$\beta$}  (c);

\end{tikzpicture}
\end{figure}

\item If there is only one bridging arc $\alpha$ at $j$, there is a triangle incident with $j$ 
formed by two bridging arcs $\alpha$ and $\beta$ with endpoint $r$ on 
the inner boundary ($n+1\le r\le n+m$) 
and 
a peripheral arc or boundary segment with endpoint $j$ on the outer boundary. 
For brevity, we denote this arc or boundary segment $\gamma$. 
If this triangle 
lies to the left (resp.\ right) of $\alpha$, we insert a new vertex $v_1$ between $r-1$ and $r$ 
(resp.\ between $r$ and $r+1$), with the appropriate reduction of 
$r-1$ (or $r+1$) if it is not in $\{n+1,\dots, n+m\}$. 
We thus work in $A_{n,m+1}$. 
We attach $\beta$, together with all arcs incident with $r$ lying to the left (resp.\ right) of $\beta$, 
with the new vertex $v_1$. 
There remains a quadrilateral with sides $\alpha$, $\gamma$, $\beta$ and 
$[v_1,r]$ (resp.\ $[r,v_1]$).  
To obtain a triangulation as claimed, we add a new bridging arc connecting $j$ and $v_1$ 
and we are done. 

\begin{figure}[H]
\begin{tikzpicture}[scale=1,font=\normalsize] 

\draw[semithick] (-1,1) to (1.5,1);

\node (a) at (1,1) [fill,circle,inner sep=1pt] {};
\draw (a) node [above] {$r$};

\draw[semithick] (-1,-1) to (1.55,-1);

\node (1) at (-0.5,-1) [fill,circle,inner sep=1pt] {};
\node (j) at (1,-1) [fill,circle,inner sep=1pt] {};
\draw (j) node [below] {$j$};

\draw[semithick,opacity=0.5,out=30,in=150] (1) to node [shift={(0,0.2)}] {$\gamma$} (j);
\draw[semithick,opacity=0.5] (j) to node [shift={(0.2,0)}] {$\alpha$}  (a);
\draw[semithick,opacity=0.5] (1) to node [shift={(-0.2,0)}] {$\beta$} (a);

\end{tikzpicture}
\hskip 1cm
\begin{tikzpicture}[scale=1,font=\normalsize] 

\draw[semithick] (-1,1) to (1.5,1);

\node[red] (v) at (0.25,1) [fill,circle,inner sep=1pt] {};
\node (a) at (1,1) [fill,circle,inner sep=1pt] {};
\draw[red] (v) node [above] {$v_1$};
\draw (a) node [above] {$r$};

\draw[semithick] (-1,-1) to (1.55,-1);

\node (1) at (-0.5,-1) [fill,circle,inner sep=1pt] {};
\node (j) at (1,-1) [fill,circle,inner sep=1pt] {};
\draw (j) node [below] {$j$};

\draw[semithick,opacity=0.5,out=30,in=150] (1) to node [shift={(0,0.2)}] {$\gamma$} (j);
\draw[semithick,opacity=0.5] (j) to node [shift={(0.2,0)}] {$\alpha$}  (a);
\draw[semithick,opacity=0.5] (1) to node [shift={(-0.2,0)}] {$\beta$} (v);
\draw[semithick,red,opacity=0.5] (j) to (v);

\end{tikzpicture}
\end{figure}

\item If there is an asymptotic arc at $j$, 
then we can assume that $T$ is a triangulation 
of $A_{n,0}$. All 
asymptotic arcs are of the same type, so we may take adic arcs 
and $\alpha_j\in T$. 
Consider the fundamental domain of $\U$ on the lower boundary vertices 
$(j-1,0),(j,0),\dots, (j+n-2,0),(j+n-1,0)$, where both $(j-1,0)$ and $(j+n-1,0)$ correspond to $j$. 
We add a new vertex to the upper boundary and 
replace every asymptotic arc by a bridging arc 
from the corresponding vertex of the lower boundary to the new vertex of the upper 
boundary. Since there are bridging arcs from $(j-1,0)$ and $(j+n-1,0)$ to the new vertex, 
the number of triangles incident with $j$ (in the annulus) is $a_j+1$.

The figures illustrates this for $n=5$, $j=3$. 
\begin{figure}[H]
\begin{tikzpicture}[scale=1,font=\small] 

\draw[semithick,opacity=0.5,dashed] (-0.5,0) to (5.5,0);

\draw[semithick] (-0.5,1) to (5.5,1);

\draw[semithick] (-0.5,-1) to (5.5,-1);

\foreach \x in {0,1,2,3,4,5} {
   \begin{scope}[shift={(\x cm, 0 cm)}]
    \node (\x) at (0,-1) [fill,circle,inner sep=1pt] {};
   \end{scope}
}
\draw (0) node [below] {$(3,0)$};
\draw (1) node [below] {$(4,0)$};
\draw (2) node [below] {$(5,0)$};
\draw (3) node [below] {$(6,0)$};
\draw (4) node [below] {$(7,0)$};
\draw (5) node [below] {$(8,0)$};

\draw[semithick,opacity=0.5,out=30,in=150] (0) to (2);
\draw[rounded corners,semithick,opacity=0.5] (0) to  (0,-0.45) to (-0.5,-0.45);
\draw[opacity=0.5] (0) node [shift={(0.3,0.4)}] {$\alpha_3$};
\draw[rounded corners,semithick,opacity=0.5] (2) to  (2,-0.35) to (-0.5,-0.35);
\draw[opacity=0.5] (2) node [shift={(0.3,0.5)}] {$\alpha_2$};
\draw[rounded corners,semithick,opacity=0.5] (3) to  (3,-0.25) to (-0.5,-0.25);
\draw[opacity=0.5] (3) node [shift={(0.3,0.6)}] {$\alpha_1$};
\draw[semithick,opacity=0.5,out=30,in=150] (3) to (5);
\draw[rounded corners,semithick,opacity=0.5] (5) to  (5,-0.15) to (-0.5,-0.15);
\draw[opacity=0.5] (5) node [shift={(0.3,0.7)}] {$\alpha_3$};

\end{tikzpicture}
\hskip 1cm
\begin{tikzpicture}[scale=1,font=\small] 

\draw[opacity=0,fill=gray,fill opacity=0.15] (0,-1) -- (0,1) --  (2,-1) to [out=150,in=30]  (0,-1);
\draw[opacity=0,fill=gray,fill opacity=0.15] (3,-1) -- (0,1) -- (5,1) --  (5,-1) to [out=150,in=30]  (3,-1);

\draw[semithick] (-0.5,1) to (5.5,1);

\node[red] (a) at (0,1) [fill,circle,inner sep=1pt] {};
\node[red] (b) at (5,1) [fill,circle,inner sep=1pt] {};

\draw[red] (a) node [above] {$(0,1)$};
\draw[red] (b) node [above] {$(1,1)$};

\draw[semithick] (-0.5,-1) to (5.5,-1);

\foreach \x in {0,1,2,3,4,5} {
   \begin{scope}[shift={(\x cm, 0 cm)}]
    \node (\x) at (0,-1) [fill,circle,inner sep=1pt] {};
   \end{scope}
}

\draw (0) node [below] {$(3,0)$};
\draw (1) node [below] {$(4,0)$};
\draw (2) node [below] {$(5,0)$};
\draw (3) node [below] {$(6,0)$};
\draw (4) node [below] {$(7,0)$};
\draw (5) node [below] {$(8,0)$};


\draw[semithick,opacity=0.5,out=30,in=150] (0) to (2);
\draw[semithick,opacity=0.5,out=30,in=150] (3) to (5);

\draw[semithick,red,opacity=0.5] (0) to (a);
\draw[semithick,opacity=0.5] (2) to (a);
\draw[semithick,opacity=0.5] (3) to (a);
\draw[semithick,opacity=0.5,out=148,in=-20] (5) to (a);
\draw[semithick,red,opacity=0.5] (5) to (b);

\end{tikzpicture}
\end{figure}

\end{inparaenum}
\end{inparaenum}
\end{proof}

Note that when increasing an entry of a quiddity sequence, the 
result may have smaller period and thus be realizable in an annulus with fewer points on 
both boundaries. The following example illustrates this. 

\begin{ex}
Consider the quiddity sequence $(4,1,5,1)$. It is realizable by a triangulation of $A_{4,1}$ with 
two peripheral arcs $[1,3]$, $[3,1]$, one bridging arc from $1$ to $5$ and 
two bridging arcs from $3$ to $5$. There is one bridging arc at $1$, so following 
the arguments of the proof of the theorem (case  B(ii)), 
$(5,1,5,1)$ is realizable by $A_{4,2}$. 
The triangulation constructed in the proof of Proposition~\ref{prop:adding-one} 
has the two peripheral arcs $[1,3]$, $[3,1]$, and the four bridging arcs 
$[1,5]$, $[1,v_1]$, $[3,v_1]$ and $[3,5]$. 
However, $(5,1,5,1)$ is 2-periodic, and can be 
realized by a triangulation of $A_{2,1}$ with one peripheral arc $[1,1]$ and 
two bridging arcs connecting $1$ with $3$. 
\end{ex}

If we take a triangulation of $A_{n,0}$ with $n$ adic arcs, the associated 
sequence is the constant sequence with $a_i=2$ for all $i$. 
We already know that this gives an infinite frieze, cf.\ 
Figure~\ref{fig:basicfrieze}. Using this, an immediate consequence 
of the theorem is that any sequence where every $a_i$ is at least $2$ 
is realizable. 
In this case, we can actually say much more, as we can construct a triangulation 
directly: 

\begin{cor}\label{cor:bounding-realizable}
Every periodic infinite frieze with quiddity sequence $(a_1,\dots, a_n)$ 
where $a_i\ge 2$ for all $i$ is realizable. Furthermore, if $a_i>2$ for some $i$, it
can be realized in a triangulation of an annulus which contains only bridging arcs. 
\end{cor}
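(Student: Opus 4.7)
The plan is to give a direct construction, splitting according to whether some $a_i$ exceeds $2$. An alternative would be to start from the realization of $(2,\dots,2)$ and iterate Proposition~\ref{prop:adding-one}, but that approach would have to control, step by step, that no peripheral arc is ever introduced; the direct construction avoids this book-keeping.

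If $a_1=\cdots=a_n=2$, the triangulation of $A_{n,0}$ by the $n$ adic arcs $\alpha_1,\dots,\alpha_n$ realizes the quiddity sequence (this is the frieze of Figure~\ref{fig:basicfrieze}). Since $m=0$ there are no bridging arcs, but the second clause of the corollary makes no assertion in this case.

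Otherwise some $a_j>2$, so $m:=\sum_{i=1}^{n}(a_i-2)>0$. In $A_{n,m}$ with outer vertices $v_1,\dots,v_n$ and inner vertices $w_1,\dots,w_m$ (the inner indices read modulo $m$), set $k_i := 1+\sum_{j<i}(a_j-2)$ and draw, for each $i$, the fan of $a_i-1$ bridging arcs
\[
v_iw_{k_i},\ v_iw_{k_i+1},\ \ldots,\ v_iw_{k_i+a_i-2}.
\]
By construction $k_{i+1}\equiv k_i+a_i-2\pmod{m}$, so the fan at $v_i$ ends precisely where the fan at $v_{i+1}$ begins; consecutive fans therefore share exactly one inner vertex and the fans close up correctly when one wraps around from $v_n$ back to $v_1$. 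This yields $\sum_i(a_i-1)=n+m$ pairwise compatible bridging arcs, which is the right number for a triangulation of $A_{n,m}$.

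To check that the quiddity sequence is $(a_1,\dots,a_n)$, observe that the fan at $v_i$ cuts out $a_i-2$ interior triangles $v_iw_{k_i+s}w_{k_i+s+1}$, and the overlap with the neighbouring fans produces two further triangles $v_{i-1}v_iw_{k_i}$ and $v_iv_{i+1}w_{k_{i+1}}$ supported on the two outer boundary segments at $v_i$, giving exactly $a_i$ triangles incident to $v_i$. The resulting triangulation has only bridging arcs, as required. The main point requiring care is verifying mutual compatibility of the arcs globally around the annulus (the fans nest cleanly because their inner ranges are consecutive and disjoint except at shared endpoints), and the degenerate case $a_i=2$, in which the fan at $v_i$ reduces to the single arc $v_iw_{k_i}=v_iw_{k_{i+1}}$, yet the two neighbouring boundary triangles remain distinct since $m>0$.
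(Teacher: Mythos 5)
Your construction is correct and is essentially the paper's own proof: the paper likewise places $m=\sum_i(a_i-2)$ marked points on the inner boundary, joins each outer vertex $i$ to its $a_i-2$ points above it, and then adds one further bridging arc per outer vertex, yielding $a_i-1$ bridging arcs and hence $a_i$ incident triangles at each $i$; your indices $k_i$ just make the paper's ``complete to a triangulation'' step explicit. The only cosmetic difference is the all-$2$ base case, where the paper takes $A_{1,0}$ with a single asymptotic arc instead of $A_{n,0}$ with $n$ adic arcs, but both appear in the surrounding text and are interchangeable.
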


\begin{proof}
If $a_i=2$ for all $i$, we can take the annulus $A_{1,0}$ and triangulate it 
with one asymptotic arc. 
To construct a triangulation in the other cases, we can use a similar 
strategy as in 
the proof of Proposition~\ref{prop:adding-one}. Set $d_i:=a_i-2$ for all $i$, and $m:=\sum d_i$. 
We draw an annulus $A_{n,0}$ as a cylinder in the plane, with 
marked points $1,2,\dots, n,1$ 
on the lower boundary. 
Above each marked point $i$, draw $d_i$ marked points on the upper boundary 
and connect $i$ with these $d_i$ marked points using bridging arcs. 
The result of this is a set of pairwise compatible arcs in $A_{n,m}$. 
We then complete 
this set of arcs to a triangulation by adding one further (compatible) 
bridging arc for each marked point 
on the lower boundary. 
We thereby obtain a triangulation of $A_{n,m}$ 
consisting solely of bridging arcs. By construction, each 
$i$ on the lower boundary has $a_i-1$ arcs to the upper boundary and hence 
is incident with $a_i$ triangles. 
\end{proof}

\begin{theorem}\label{thm:periodic-combinatorial}
Let $\mathcal F$ be a periodic frieze with quiddity sequence $(a_1,\dots, a_n)$. 
Then either $\mathcal F$ is finite or $(a_1,\dots, a_n)$ is realizable in $A_{n,m}$ 
for some $m\ge 0$. 
\end{theorem}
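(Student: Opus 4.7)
The plan is to proceed by strong induction on the period $n$. The base cases $n=1$ and $n=2$ are dispatched by Example~\ref{ex:small-period}: one enumerates the possible positive quiddity sequences of that length, finding that they either give finite friezes (such as $(1)$, $(1,2)$, $(1,3)$) or admit explicit annulus realizations (such as $(1,a)$ with $a\geq 4$, or any sequence with all entries at least $2$, via Corollary~\ref{cor:bounding-realizable}). For the inductive step with $n\geq 3$, I split into two cases. If every $a_i\geq 2$, then Corollary~\ref{cor:bounding-realizable} directly realizes $(a_1,\dots,a_n)$ in $A_{n,m}$ with $m=\sum_i(a_i-2)$, and we are done. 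Otherwise some $a_k=1$, and the positivity of $m_{k-1,k}=a_{k-1}a_k-1$ and $m_{k,k+1}=a_k a_{k+1}-1$ forces $a_{k-1},a_{k+1}\geq 2$.

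In that case I perform the classical Conway--Coxeter reduction: remove $a_k$ and decrease each of its neighbors by $1$ to obtain the length-$(n-1)$ sequence
\begin{equation*}
(b_1,\dots,b_{n-1}) = (a_1,\dots,a_{k-2},\, a_{k-1}-1,\, a_{k+1}-1,\, a_{k+2},\dots,a_n).
\end{equation*}
The first key step is to verify that $(b_i)_i$ is itself the quiddity sequence of an $(n-1)$-periodic frieze of positive integers. I would establish this by showing that the entries of the prospective reduced frieze agree, up to reindexing, with the entries $m_{ij}$ of $\mathcal F$ for $i,j\neq k$; this can be done via the determinantal formula~\eqref{friezerelationa} or by a direct induction using the recursion~\eqref{friezerelationb}. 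Positivity of the reduced frieze is then inherited from $\mathcal F$.

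By the inductive hypothesis applied to $(b_1,\dots,b_{n-1})$, either (i) the reduced frieze is finite, in which case $\mathcal F$ is also finite since one recovers the original triangulated polygon from the reduced one by inserting an ``ear'' vertex (a new marked point with a single incident triangle) between the neighbors of position $k$; or (ii) there is a triangulation $T'$ of some $A_{n-1,m}$ with quiddity $(b_1,\dots,b_{n-1})$. In case (ii) I construct a triangulation $T$ of $A_{n,m}$ by inserting a new outer boundary vertex between the two vertices of $T'$ carrying matching numbers $b_{k-1}$ and $b_k$, and adding a single peripheral arc connecting those two old vertices. The new vertex is then incident with exactly one triangle, its two neighbors each gain one triangle, and all other matching numbers are unchanged, yielding quiddity sequence $(a_1,\dots,a_n)$. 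The principal obstacle I expect is the verification that Conway--Coxeter reduction remains valid in the infinite setting: unlike the finite case, one must simultaneously confirm positivity in infinitely many rows. Once this is established, the rest is a straightforward combinatorial construction in the spirit of Proposition~\ref{prop:adding-one}.
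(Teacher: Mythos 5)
Your proposal is correct and follows essentially the same route as the paper: reduce the quiddity sequence at an entry equal to $1$ (Conway--Coxeter style), treat sequences with all entries at least $2$ via Corollary~\ref{cor:bounding-realizable} and short periods via Example~\ref{ex:small-period}, and lift a realization of the reduced sequence by inserting an ear vertex together with a peripheral arc. The step you flag as the principal obstacle --- that reducing an infinite frieze at a $1$ yields again an infinite frieze --- is exactly what the paper outsources to \cite[Theorem 3.7]{tschabold} (and to Conway--Coxeter in the finite case), so your plan to prove it directly is sound but not needed.
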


\begin{proof}
Let $\mathcal F$ be $n$-periodic with quiddity sequence $(a_1,\dots, a_n)$, 
and assume that $n$ is its shortest period. 

We first consider friezes of period at most $2$, making use of 
Example~\ref{ex:small-period}: \\
Let $\mathcal F$ have period $1$. If $(a_1)=(1)$, the frieze is necessarily 
finite, with 4 rows, coming from the trivial triangulation of the triangle.
If the quiddity sequence is $(a)$, with $a\ge 2$, then $\mathcal F$ is infinite. 
The claim follows from Corollary~\ref{cor:bounding-realizable}. 

Let $\mathcal F$ have period $2$. If $(a_1,a_2)=(1,2)$ or $(a_1,a_2)=(1,3)$, 
then $\mathcal F$ is finite, coming from a triangulation of a quadrilateral or 
from a hexagon triangulated by an inner triangle. If $(a_1,a_2)=(1,a)$ with $a\ge 4$, $\mathcal F$ is infinite. We have seen 
in Example~\ref{ex:small-period} (d) and (e) that this sequence is realizable. 
The case $(a_1,a_2)$ with $a_i\ge 2$ is again infinite and 
covered in Corollary~\ref{cor:bounding-realizable}. 

We now assume that $\mathcal F$ has period $n\ge 3$. Let 
$q_0:=(a_1,\dots, a_n)$. If all entries of $q_0$ are at least
$2$, $\mathcal F$ is infinite and realizable in $A_{n,m}$ for some $m$.
If $(a_1,\dots, a_n)$ contains an entry $1$, we reduce $q_0$ to a sequence 
with $n-1$ entries. At this point, we cannot have two entries $a_i=a_{i+1}=1$, since then, the 
unimodular rule would imply $m_{i,i+1}=0$. This is only possible in the finite frieze 
with $a_i=1$ for all $i$, a $1$-periodic frieze, contradicting the assumption 
on the period. 

W.l.o.g.\ let $a_n=1$. 
Consider the sequence $q_1:=(a_1-1,a_2,\dots, a_{n-2},a_{n-1}-1)$ obtained by subtracting 
$1$ from $a_1$ and from $a_{n-1}$ and by dropping its $n$th entry $a_n$. 
If $\mathcal F$ is finite, the new sequence defines a finite frieze by \cite[Question 23]{coco1,coco2}; 
if $\mathcal F$ is infinite, the new sequence defines an infinite frieze by \cite[Theorem 3.7]{tschabold}. 
Its shortest period is $n-1$ or a smaller 
divisor of $n-1$. 

\begin{inparaenum}[(i)]
\item If all entries of $q_1$ are at least $2$,  
$q_1$ is realizable in $A_{n-1,m}$ with an appropriate $m$. 
Let $T_1$ be a triangulation of $A_{n-1,m}$ realizing $q_1$. By adding a new marked point $n$ between $n-1$ and $1$
on the outer boundary (relabeling the marked points on the inner boundary appropriately), and adding the peripheral
arc $[n-1,1]$, we obtain a triangulation of $A_{n,m}$ realizing $q_0$ and we are done. 

\item So assume that $q_1$ still has a $1$ among its entries. 
If $q_1$ is $2$-periodic, say 
$q_1=(1,a,\dots, 1,a)$ with $2\cdot s=n-1$ for $s\ge 1$, the frieze is finite for $a=2,3$. 
If $a\ge 4$, 
$\mathcal F$ is infinite. 
In this case, we can realize $(1,a)$ in an annulus $A_{2,m'}$ 
and thus $q_1$ in $A_{n-1,m}$ for $m=s\cdot m'$ by Lemma~\ref{lm:period-multiply}. 
To realize $q_0$, we proceed as above. We pick a triangulation of $A_{n-1,m}$ realizing 
$q_1$, insert a new marked point $n$ on the outer boundary between $n-1$ and $1$ (relabeling 
the marked points on the inner boundary), 
and add the peripheral arc $[n-1,1]$. 
The resulting triangulation of $A_{n,m}$ realizes $q_0$. 

If the shortest period of $q_1$ is still at least $3$, we iterate the reduction procedure. After finitely 
many, say $t$, reductions, the sequence $q_t$ will either be of the form $(1,2)$ or $(1,3)$ 
and hence define a finite frieze or it will be realizable in 
$A_{n-r,m'}$ with $m'$ appropriate. To find a triangulation of $A_{n,m}$ realizing $q_0$, we can work 
backwards, adding marked points to the outer boundary iteratedly.
\end{inparaenum}
\end{proof}

\begin{remark}
Note that the proof of Theorem~\ref{thm:periodic-combinatorial} provides an algorithm 
for testing whether a sequence $(a_1,\dots, a_n)$ gives rise to a frieze 
(and moreover, whether it is finite or infinite) or not: If the sequence has shortest period 
at most 2, we can say immediately. So, suppose that the sequence has shortest period $n \geq 3$.
If two consecutive entries (in the cyclic order) are 1, then it doesn't give a frieze. If all
the entries are at least 2, it defines an infinite frieze. Otherwise, say if $a_n=1$, we reduce
the sequence to $(a_1-1,a_2,\dots, a_{n-2},a_{n-1}-1)$. We then test this new sequence, noting 
that the answer must match that for the original sequence. Since the shortest period decreases
with each reduction, we are guaranteed a conclusive outcome after a finite number of steps.
\end{remark}

\begin{remark}\label{rm:optimal}
Let $\mathcal{F}$ be a periodic infinite frieze with shortest period $n$ and quiddity sequence 
$(a_1, \ldots, a_n)$. From above, $(a_1, \ldots, a_n)$ can be realized in an annulus 
$A_{n,m}$ for some $m$. We may also readily deduce the minimal value of $m$ 
for which this is possible (corresponding to a triangulation having no peripheral 
arcs on the inner boundary). 
Upon applying the algorithm to $(a_1, \ldots, a_n)$, suppose that the sequence obtained 
at the point at which we reach a stopping condition is $(b_1, \ldots, b_r)$, with 
$r$ the shortest period. 
We must have either (i) $r=1$ and $b_i \geq 2$; (ii) $r=2$ with $b_1 = 1$ and 
$b_2 \geq 4$ (or vice versa); or (iii) $r \geq 2$ and $b_i \geq 2$ for all $i$, 
$1 \leq i \leq r$. In these cases, the minimal value for $m$ is respectively $b_1 -2$, $b_2 -4$ 
and $\sum_{i=1}^r (b_i -2)$.
\end{remark}

%
\section{Matchings and a characterization of infinite friezes}\label{sec:final-matchings}
%

We now relax our set-up, considering triangulations of an infinite strip in the plane 
which need not be periodic. These triangulations are shown to also give rise to infinite 
friezes. 
In turn, we establish 
that all infinite friezes arise in this manner. 
Furthermore, we show that 
the entries in an infinite frieze 
are matching numbers obtained from matchings between vertices and triangles in any realization.

\begin{figure}[H]
\begin{tikzpicture}[scale=1.25,font=\small] 

 \node (a) at (-1.5,0) [fill,circle,inner sep=1pt] {};
  \node (b) at (6.5,0) [fill,circle,inner sep=1pt] {};
  
 \draw (a) node [above] {$- \infty$};
 \draw (b) node [above] {$+ \infty$};

\draw[semithick,opacity=0.5] (-0.5,0) to (5.5,0);
\draw[semithick,opacity=0.5,dashed] (-0.5,0) to (-1.5,0);
\draw[semithick,opacity=0.5,dashed] (5.5,0) to (6.5,0);

\draw[semithick] (-0.5,1) to (5.5,1);
\node (a) at (2.5,1) [fill,circle,inner sep=1pt] {};

\draw[rounded corners,semithick,opacity=0.5] (a) to (2.25,0.25) to (-0.5,0.25);
\draw[rounded corners,semithick,opacity=0.5] (a) to  (2.75,0.25) to (5.5,0.25);

\draw[semithick] (-0.5,-1) to (5.5,-1);

\foreach \x in {0,1,2,3,4,5} {
   \begin{scope}[shift={(\x cm, 0 cm)}]
    \node (\x) at (0,-1) [fill,circle,inner sep=1pt] {};
   \end{scope}
}

\draw[semithick,opacity=0.5,out=30,in=150] (0) to (2);
\draw[rounded corners,semithick,opacity=0.5] (0) to  (0,-0.45) to (-0.5,-0.45);
\draw[rounded corners,semithick,opacity=0.5] (2) to  (2,-0.35) to (-0.5,-0.35);
\draw[rounded corners,semithick,opacity=0.5] (3) to  (2.75,-0.25) to (-0.5,-0.25);
\draw[semithick,opacity=0.5,out=30,in=150] (3) to (5);
\draw[rounded corners,semithick,opacity=0.5] (5) to  (5,-0.35) to (5.5,-0.35);
\draw[rounded corners,semithick,opacity=0.5] (3) to  (3.25,-0.25) to (5.5,-0.25);

\end{tikzpicture}
\end{figure}

We will use $\V=\V(M_1,M_2)$ to denote an infinite strip of height one in the plane, 
with lower boundary $\{(x,0)\mid x\in\mathbb{R}\}$, upper boundary 
$\{(x,1)\mid x\in\mathbb{R}\}$, together with vertices $M_1:=\{(i,0) \mid i\in\ZZ\}$ on the lower boundary 
and a set $M_2=\{(q,1)\mid q\in A\subseteq\mathbb{Q}\}$ of vertices on the upper boundary. 
In addition, we have two limit points $\pm\infty$ appearing respectively at the right and left extremities of this strip
(see the figure above).
For convenience, we sometimes use $i$ to denote $(i,0)$. 

We call an arc between marked points {\em bridging} (resp.\ {\em peripheral}) 
if its endpoints belong to different boundaries (resp.\ the same boundary) of $\V$.  
The curve connecting the two 
limit points is called the {\em generic arc}.
An arc is said to be {\em asymptotic}, if it 
starts at a marked point on a boundary and tends towards $+\infty$ or $-\infty$. 

We want to consider (infinite) triangulations of 
$\V$. These are maximal collections of pairwise compatible arcs 
between vertices. 
Note that if a triangulation contains no bridging arc, it must contain the generic arc 
as this is compatible with all peripheral and asymptotic arcs. 
In particular,
we are interested in those triangulations for which all vertices
on the lower boundary are incident with only finitely many arcs.
We refer to such a triangulation as an \emph{admissible triangulation} of $\V$. 

\begin{defn}
We say that an infinite sequence $(a_i)_i$ of positive 
integers (or a frieze $\mathcal F$ with quiddity row $(a_i)_i$) is 
{\em realizable in $\V$} if there 
exists a set $M_2:=\{(q,1)\mid q\in A\subseteq\mathbb{Q}\}$ of marked points 
on the upper boundary and a (admissible) triangulation of $\V(M_1,M_2)$ such 
that $a_i$ is equal to the number of triangles 
incident with $(i,0)$ for every $i\in \mathbb{Z}$.  
\end{defn}

In view of Theorem~\ref{thm:periodic-combinatorial}, 
it is natural to ask if every infinite frieze can be realized in $\mathbb{V}$. 
We next show that this is indeed the case, 
giving a constructive proof. 

\begin{theorem}\label{thm:friezes-realize}
Every infinite frieze is realizable in $\V$. 
\end{theorem}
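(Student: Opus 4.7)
My plan is to extend the reduction strategy of Theorem~\ref{thm:periodic-combinatorial} to the non-periodic setting of the infinite strip $\V$. The proof would proceed by induction on the number of reduction steps needed, with a base case that can be realized directly.

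The base case is when $a_i \geq 2$ for all $i \in \ZZ$. Here an admissible triangulation of $\V$ can be constructed explicitly, by direct analogy with Corollary~\ref{cor:bounding-realizable}: above each lower vertex $(i,0)$ place $a_i - 2$ new marked points on the upper boundary and draw bridging arcs from $(i,0)$ to those points, then complete the collection so that each $(i,0)$ is incident with exactly $a_i$ triangles. The edge case in which $a_i = 2$ for all $i$ instead uses the lift to $\V$ of the adic-arc triangulation of $A_{1,0}$, namely asymptotic arcs based at every lower vertex and tending to the same limit point.

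In the general case, let $S = \{i \in \ZZ : a_i = 1\}$. By the unimodular rule, $S$ contains no two consecutive integers. Form the reduced sequence $(\tilde a_j)_j$ by deleting the entries indexed by $S$ and subtracting $1$ from each neighbor of a deleted entry. By the argument of \cite[Theorem 3.7]{tschabold}, which adapts verbatim to the non-periodic setting, $(\tilde a_j)_j$ is the quiddity row of an infinite frieze indexed by $\ZZ \setminus S$, which we relabel as $\ZZ$. Assuming inductively that $(\tilde a_j)_j$ is realized by an admissible triangulation $T'$ of $\V$, I would produce $T$ by reinserting the deleted lower-boundary marked points between their former neighbors and adding, for each $i \in S$, the peripheral arc $[i-1,i+1]$ on the lower boundary. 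This creates an ear triangle at $i$, contributing exactly one triangle at $i$ and one extra triangle at each of $i \pm 1$, restoring the incidence counts to $a_{i-1}, a_i, a_{i+1}$.

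The main obstacle is justifying that the iterated reduction terminates at each fixed lower-boundary position. In the periodic case the shortest period strictly decreases at each step, but no such global invariant is available here. Instead, one should argue locally: for each $j \in \ZZ$, after finitely many reductions the image of $j$ (if it has not already been deleted) acquires value at least $2$. I expect this to follow from the positivity of all $m_{ij}$ in $\mathcal F$: each reduction corresponds to a determinantal identity on neighboring frieze entries (in the spirit of Lemma~\ref{friezerelationc}), and an infinite cascade of $1$'s near $j$ would eventually force some $m_{ij}$ to become zero or negative, contradicting the hypothesis that $\mathcal F$ is an infinite frieze. Once termination has been established, admissibility of the constructed triangulation $T$ follows because only finitely many arcs are ever added incident with any given lower vertex.
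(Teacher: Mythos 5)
Your base case and the single reduction/reinsertion step are fine and match the paper's construction. The genuine gap is exactly the one you flag, and your proposed fix does not work: it is \emph{not} true that positivity of the frieze entries forces the reduction to terminate. Positivity only gives a local statement (a fixed position cannot be decremented infinitely often, since each decrement would correspond to a neighbouring entry $1$ and the reduced rows remain quiddity rows of friezes), but it does not prevent new $1$'s from appearing at every stage in a hierarchical fashion, so that $Z_r \supsetneq Z_{r-1}$ for all $r$ and every position is eventually deleted. Such friezes exist: take an admissible triangulation of $\V(M_1,\emptyset)$ with no bridging or asymptotic arcs, e.g.\ the ``nested fan'' with peripheral arcs $[2k,2k+2]$, $[4k,4k+4]$, $[8k,8k+8]$, \dots\ together with the generic arc; by Theorem~\ref{thm:arbitrary-matchings} its quiddity row is an infinite frieze, and applying your reduction to it produces a new $1$ at every level forever. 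Consequently your induction ``assume the reduced row is realizable'' is not well-founded — there is no base case to bottom out on.

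The paper's proof avoids induction altogether. It runs the same reduction, recording at each step the peripheral arcs added, and then passes directly to the limit: if the process stops it completes with bridging arcs as you do, but if it never stops it takes $C^{\infty}$, the union of \emph{all} peripheral arcs added over all steps together with the generic arc $[-\infty,\infty]$, and verifies by hand that this is a maximal collection of pairwise compatible arcs in $\V(M_1,\emptyset)$: non-termination implies every vertex is eventually capped by an ear (so no asymptotic arc is compatible), there are no bridging arcs to worry about, and a short case analysis shows every peripheral arc not in $C^{\infty}$ crosses one in $C^{\infty}$. To repair your argument you would need to replace the induction by this kind of direct limit construction (or an equivalent compactness argument); the termination claim as you state it should be deleted, since in the non-terminating case the correct conclusion is the opposite of what you assert.
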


\begin{proof}
Let $q^{(0)} = (a_i^{(0)})_{i \in \mathbb{Z}}$ be the quiddity row of an infinite frieze. We explicitly construct a triangulation 
of $\mathbb{V}$ realizing $q^{(0)}$. If $a_i^{(0)} \ge 2$ for all $i$, then the construction can be done using essentially the 
same strategy as the one described in Corollary~\ref{cor:bounding-realizable}. So, suppose that some of the entries 
of $q^{(0)}$ are 1's. Indeed, let $Z_0 = \{ t \in \mathbb{Z} \mid a_t^{(0)} = 1 \}$, and recall that no two consecutive 
entries of $q^{(0)}$ can both be 1.

We start from the infinite strip $\mathbb{V} = \mathbb{V}(M_1,\emptyset)$, not yet containing any arcs. For each 
$t \in Z_0$, we add the peripheral arc $[t-1,t+1]$ in $\mathbb{V}$. The peripheral arcs added in this step are pairwise 
compatible. 
We also reduce $q^{(0)}$ at each 1, giving the new quiddity row $q^{(1)} = ( a_i^{(1)} )_{i \in \mathbb{Z} \setminus Z_0}$ 
where $a_{i+1}^{(1)}=a_{i+1}^{(0)}-1$ if $i \in Z_0$ and $i+2 \notin Z_0$ (similarly for $a_{i-1}^{(1)}$), 
$a_{i+1}^{(1)}=a_{i+1}^{(0)}-2$ if $i,i+2 \in Z_0$ (similarly for $a_{i-1}^{(1)}$), and 
$a_i^{(1)} = a_i^{(0)}$ otherwise. 

If $q^{(1)}$ has 1's, we repeat the process of adding arcs and reducing. In general, let $q^{(r)} = (a_i^{(r)})_{i \in \mathbb{Z} \setminus Z_{r-1}}$ 
be the quiddity row obtained after $r$ steps, with $Z_r = Z_{r-1} \cup \{ t \in \mathbb{Z} \setminus Z_{r-1} \mid a_t^{(r)}=1 \}$. 
For each $t \in Z_r \setminus Z_{r-1}$, we then add the peripheral arc $[t_-^{(r)},t_+^{(r)}]$ in $\mathbb{V}$ (above any previously 
added peripheral arcs), where $t_-^{(r)}$ (resp.\ $t_+^{(r)}$) is the largest (resp.\ smallest) value below (resp.\ above) $t$ in 
$\mathbb{Z} \setminus Z_{r-1}$. These peripheral arcs are pairwise compatible and also compatible with all previously introduced arcs.

If at some point, we reach a quiddity row $q^{(r)}$ having no 1's (i.e.\ if $Z_r = Z_{r-1}$), then we can complete our 
collection of arcs to a triangulation of $\mathbb{V} = \mathbb{V}(M_1,M_2)$ realizing $q^{(0)}$, using bridging arcs 
and an appropriate choice of $M_2$.

So, assume $Z_r \supsetneq Z_{r-1}$ for all $r \ge 1$. Denote by $C_r$ the collection of (peripheral) arcs added in the $r$th step. 
Let $C^{(r)} = \cup_{j=1}^r C_j$ be the collection of all arcs added up to and including the $r$th step, and let 
$C^{\infty} = \left ( \cup_{j=1}^{\infty} C_j \right ) \cup [-\infty,\infty]$ be the total collection of all arcs we obtain via our procedure, 
together with the generic arc. We will show that $C^{\infty}$ is a triangulation of $\mathbb{V}(M_1,\emptyset)$. It realizes $q^{(0)}$ by construction.

Since $Z_r \supsetneq Z_{r-1}$ for all $r \ge 1$, it may be routinely established that for any given $i \in \mathbb{Z}$, 
there exists some $N \ge 0$ such that $a_i^{(N)}=1$. It follows immediately that the asymptotic arcs incident 
with $i$ are not compatible with $C_{N+1}$ (and hence also $C^{(N+1)}, C^{\infty}$). Note also that there exist no 
bridging arcs in $\mathbb{V}(M_1,\emptyset)$. In order to check that $C^{\infty}$ is a maximal collection of pairwise 
compatible arcs, it thus remains to check that an arbitrary peripheral arc either belongs to $C^{\infty}$ or crosses an 
arc belonging to $C^{\infty}$. Let $u,v \in \mathbb{Z}$, $u<v-1$ and consider the peripheral arc $[u,v]$. We distinguish three separate cases.

\begin{inparaenum}[\text{Case} $1$:] 
\item 
If $a_u^{(0)}=1$ or $a_v^{(0)}=1$, say $a_u^{(0)}=1$, then $[u-1,u+1]\in C_1$ and  
$[u,v]$ crosses this arc. 

\item 
Suppose $a_i^{(0)} \neq 1$ for all $u \le i \le v$. We choose $r$ to be minimal such that $a_u^{(r)}=1$ or $a_v^{(r)}=1$. 
Say $a_u^{(r)}=1$. Then $[u_-^{(r)},u+1] \in C_{r+1}$, and $[u,v]$ crosses this arc.

\item  
There exists a 1 (strictly) between $a_u^{(0)}$ and $a_v^{(0)}$ in $q^{(0)}$. We again choose $r$ to be minimal such that 
$a_u^{(r)}=1$ or $a_v^{(r)}=1$. If $a_u^{(r)}=a_v^{(r)}=1$, we must have $u_+^{(r)}<v$ and hence $[u,v]$ crosses the arc $[u_-^{(r)},u_+^{(r)}] \in C_{r+1}$. 
So, assume $a_u^{(r)}=1$ and $a_v^{(r)} \ge 2$. We now either have 
$u_+^{(r)}<v$ in which case $[u,v]$ crosses the arc $[u_-^{(r)},u_+^{(r)}] \in C_{r+1}$, or $u_+^{(r)}=v$ in which case $[u,v] \in C^{(r)}$.
\end{inparaenum}
\end{proof}

It is known that every entry in a finite frieze is given 
as a matching number between vertices of a polygon and triangles 
of the associated triangulation, see \cite{bci}. 
For periodic infinite friezes arising from triangulations of punctured 
discs, the entries $m_{ij}$ are also matching numbers, see \cite[Theorem~5.21]{tschabold}.

We are now able to extend this by showing that any admissible triangulation of $\V$ gives rise 
to an infinite frieze whose entries are the matching numbers for the triangulation, 
and thus that the frieze is determined completely by the geometry of the triangulation.
By Theorem~\ref{thm:friezes-realize}, this covers 
all infinite friezes 
(including in particular, periodic infinite friezes). 

\begin{defn}
Let $T$ be an admissible triangulation of $\V$. Let $i=(i,0)$ be 
a vertex on the lower boundary and $b\ge 0$. Then 
$\mathcal M_{i,i+b}$ $= \mathcal M_{i,i+b}(T)$ is the set of matchings between the vertices 
$\{(i,0),(i+1,0),\dots, (i+b,0)\}$ and triangles in $T$. 
\end{defn}

We refer to $\abs{\mathcal M_{ij}}$, for $i\le j$, as the \emph{$(i,j)$-matching number}. 
By convention, we take the $(i,i-1)$-matching number and the $(i,i-2)$-matching number 
to be $1$ and $0$, respectively, for all $i$. 

The proof of the following result is straightforward and thus we omit it. 

\begin{lm}\label{lm:matchingrelation}
Let $T$ be an admissible triangulation of $\mathbb{V}$ with no 
peripheral arcs on the lower boundary
and let $b \ge 0$. Using $a_i$ to denote the number of triangles incident with the vertex 
$(i,0)$, for all $i$,we have 
$$
\abs{\mathcal{M}_{i,i+b}}=(a_i-1) \abs{\mathcal{M}_{i+1,i+b}}+
\sum_{k=i+1}^{i+b-1}\left( (a_k-2)  \abs{\mathcal{M}_{k+1,i+b}}\right)+ a_{i+b}-1.
$$
\end{lm}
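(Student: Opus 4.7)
The plan is to derive the identity by partitioning the matchings in $\mathcal{M}_{i,i+b}$ according to how long an initial run of ``rightmost'' assignments each matching makes. For each integer $k$, write $R_k$ for the unique triangle of $T$ containing the boundary segment from $(k,0)$ to $(k+1,0)$; this triangle is simultaneously the rightmost triangle at $(k,0)$ and the leftmost triangle at $(k+1,0)$, so $R_{k-1}=L_k$ in obvious notation. The hypothesis that $T$ has no peripheral arcs on the lower boundary has two consequences I would record first: $a_k \ge 2$ for every $k$ (otherwise the single triangle at $(k,0)$ would require an edge from $(k-1,0)$ to $(k+1,0)$, which would be a lower peripheral arc); and every triangle incident with $(k,0)$ other than $R_{k-1}$ and $R_k$ meets no other vertex on the lower boundary (its remaining corners lie on the upper boundary or at $\pm\infty$).

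For a matching $\mu \in \mathcal{M}_{i,i+b}$ I would let $m(\mu)$ be the largest $m \in \{0,1,\dots,b\}$ with $\mu((i+j,0))=R_{i+j}$ for every $0 \le j < m$, and count matchings in each class. For $m=0$, $\mu((i,0))$ can be any of the $a_i-1$ triangles at $(i,0)$ other than $R_i$; by the second consequence above none of these touches $(k,0)$ for $k>i$, so the restriction of $\mu$ to the remaining vertices is an arbitrary element of $\mathcal{M}_{i+1,i+b}$, giving $(a_i-1)\abs{\mathcal{M}_{i+1,i+b}}$. For $1 \le m \le b-1$ with $k=i+m$, the assignment $\mu((k,0))$ must avoid both $R_k$ and $R_{k-1}=\mu((k-1,0))$, leaving $a_k-2$ choices; again none of these touches $(j,0)$ with $j>k$, so this class contributes $(a_k-2)\abs{\mathcal{M}_{k+1,i+b}}$. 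For $m=b$ the first $b$ values are forced and $\mu((i+b,0))$ only has to avoid $R_{i+b-1}$, contributing $a_{i+b}-1$.

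Summing the three contributions and reindexing the middle case via $k=i+m$ gives the stated identity. The main (indeed the only) step that uses the hypothesis is the second consequence noted above; without ``no peripheral arcs on the lower boundary'', a triangle assigned to $(i+j,0)$ could also be incident with some $(j',0)$ with $j'>j$, spoiling the clean decomposition into independent matchings on shorter subintervals that drives the recurrence. Everything else is bookkeeping, consistent with the authors' comment that the proof is routine.
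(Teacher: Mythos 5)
The paper omits the proof of this lemma (``straightforward''), so there is nothing to compare against; your argument is the natural one and it is correct. The decomposition by the length of the initial run of rightmost triangles $R_i, R_{i+1},\dots$ works precisely because, in the absence of lower peripheral arcs, the only triangles meeting two lower-boundary vertices are the $R_k$ (each meeting exactly the adjacent pair $(k,0),(k+1,0)$), which is exactly the structural fact you isolate; together with $R_{k-1}\neq R_k$ (hence $a_k\ge 2$) this makes the three class counts and the gluing bijections with $\mathcal{M}_{k+1,i+b}$ immediate. One small caveat: for $b=0$ your classes $m=0$ and $m=b$ coincide and the right-hand side evaluates to $2(a_i-1)$ rather than $a_i=\abs{\mathcal{M}_{ii}}$ --- but this is a defect of the stated formula itself (it should read $b\ge 1$; the lemma is only ever invoked with $b\ge 1$ in the proof of Proposition~\ref{prop:entries-matching}), not of your argument.
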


\begin{prop}\label{prop:entries-matching}
Let $T$ be an admissible triangulation of $\mathbb{V}$ with no peripheral arcs on the lower boundary,
and let $a_i$ be the number of triangles incident with $(i,0)$, for every $i$. Then $(a_i)_i$ is the 
quiddity row of an infinite frieze $\mathcal{F}=(m_{ij})_{i,j}$, and $m_{ij}=\abs{\mathcal{M}_{ij}}$
for all $i \le j$. 
\end{prop}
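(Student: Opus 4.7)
The plan is to set $M_{ij} := |\mathcal{M}_{ij}|$ for $i \le j$, extended by the stated conventions $M_{i,i-1}=1$ and $M_{i,i-2}=0$, and to show that $(M_{ij})$ is the infinite frieze determined by $(a_i)$. This will yield both conclusions of the proposition simultaneously.

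First, $M_{ii}=a_i$ is immediate: a matching of the single vertex $\{(i,0)\}$ corresponds to a choice of one of the $a_i$ triangles incident with $(i,0)$. The key technical step is to upgrade Lemma~\ref{lm:matchingrelation} to the second-order recurrence
\[
M_{ij} \;=\; a_i\,M_{i+1,j} \;-\; M_{i+2,j} \qquad (j\ge i),
\]
which matches the frieze recurrence (\ref{friezerelationb}). For $j\in\{i,i+1\}$ this follows directly from the boundary conventions together with $M_{i,i+1}=a_ia_{i+1}-1$, the latter being a direct application of Lemma~\ref{lm:matchingrelation} with $b=1$. For $j\ge i+2$, I would apply Lemma~\ref{lm:matchingrelation} at both $(i,j)$ and $(i+1,j)$, isolate the $k=i+1$ summand from the first expansion, and recognise the remainder as exactly the right-hand side of the second expansion, hence equal to $M_{i+1,j}$; a line of rearrangement then gives the claimed recurrence.

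With this recurrence in hand, the unimodular rule $M_{ij}M_{i+1,j+1}-M_{i+1,j}M_{i,j+1}=1$ follows by a short induction on $j-i\ge -1$. The base case $j=i-1$ evaluates to $1\cdot 1-0\cdot a_i=1$; for $j\ge i$, substituting $M_{ij}=a_iM_{i+1,j}-M_{i+2,j}$ and $M_{i,j+1}=a_iM_{i+1,j+1}-M_{i+2,j+1}$ into the left-hand side makes the $a_i$-terms cancel, leaving $M_{i+1,j}M_{i+2,j+1}-M_{i+2,j}M_{i+1,j+1}$, which equals $1$ by the inductive hypothesis applied to the shifted diamond. Positivity is then automatic: each $M_{ij}$ is the upper-left corner of such a diamond, and non-negative integers with product at least one must both be positive. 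Therefore $(a_i)$ is the quiddity row of an infinite frieze $\mathcal F=(m_{ij})$ with $m_{ij}=M_{ij}=|\mathcal{M}_{ij}|$ for all $i\le j$.

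The main obstacle is the derivation of the second-order recurrence from Lemma~\ref{lm:matchingrelation}: careful bookkeeping is needed so that the long sums in the two instances of the lemma telescope and the residual is recognisable as $M_{i+1,j}$. Once that is done, the remaining verification of the unimodular rule and positivity is routine.
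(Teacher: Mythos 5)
Your plan is correct, and every step checks out: differencing Lemma~\ref{lm:matchingrelation} at $(i,j)$ and $(i+1,j)$ does yield $\abs{\mathcal{M}_{ij}}=a_i\abs{\mathcal{M}_{i+1,j}}-\abs{\mathcal{M}_{i+2,j}}$ (the isolated $k=i+1$ term contributes $(a_{i+1}-2)\abs{\mathcal{M}_{i+2,j}}$ and the remaining tail equals $\abs{\mathcal{M}_{i+1,j}}-(a_{i+1}-1)\abs{\mathcal{M}_{i+2,j}}$, so the bookkeeping closes), the diamond identity then telescopes down to the base case $j=i-1$, and positivity follows from the unimodular rule applied to non-negative integers. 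The organization, however, differs from the paper's. The paper first invokes Corollary~\ref{cor:all-2-ok} (every quiddity row with all $a_i\ge 2$ generates an infinite frieze, which holds here since the absence of peripheral arcs on the lower boundary forces $a_i\ge 2$) to get the frieze $\mathcal{F}=(m_{ij})$ for free, and then proves $m_{ij}=\abs{\mathcal{M}_{ij}}$ by induction on $j-i$: it expands $\abs{\mathcal{M}_{i,j+1}}$ via Lemma~\ref{lm:matchingrelation}, replaces the shorter matching numbers by frieze entries using the inductive hypothesis, and telescopes the resulting sum with relation (\ref{friezerelationb}). You instead show that the matching numbers themselves satisfy the second-order recurrence and the unimodular rule, so the array $(\abs{\mathcal{M}_{ij}})$ \emph{is} the frieze. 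The underlying algebra is the same (your differencing of two instances of the lemma is the paper's telescoping read in reverse), but your version is more self-contained — it does not lean on Corollary~\ref{cor:all-2-ok} or on the prior existence of $\mathcal{F}$ — at the cost of having to verify the unimodular rule and positivity by hand, which the paper gets for free from the general theory of friezes generated by a quiddity row.
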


\begin{proof}
That $(a_i)_i$ is the quiddity row of an infinite frieze 
is immediate by Corollary~\ref{cor:all-2-ok}, since $a_i \ge 2$ for all $i$.

Let $i,j$ be marked points on the lower boundary with $i\le j$. 
We show that $m_{ij}=\abs{\mathcal{M}_{ij}}$ using 
induction on $j-i$.

For $i=j$, the claim is clear. 

Now suppose $j-i= r$ and consider 
$\mathcal{M}_{i,j+1}$. By Lemma \ref{lm:matchingrelation}, 
$$
\abs{\mathcal{M}_{i,j+1}}=(a_i-1) \abs{\mathcal{M}_{i+1,j+1}} + 
\sum_{k=i+1}^j \left ( (a_k-2)  \abs{\mathcal{M}_{k+1,j+1}} \right ) + a_{j+1}-1.
$$
All sets of matchings on the right hand side involve pairs of vertices 
$(l,j+1)$ with $j+1-l\le r$, and so we can use induction to replace the 
corresponding numbers by the $m_{l,j+1}$: 
$$
\abs{\mathcal{M}_{i,j+1}}=(a_i-1)m_{i+1,j+1}+\sum_{k=i+1}^j \left ( (a_k-2)  m_{k+1,j+1} \right ) +a_{j+1}-1.
$$
We rewrite this and use (\ref{friezerelationb}): 
\begin{align*}
\abs{\mathcal{M}_{i,j+1}} & = \ \overbrace{a_im_{i+1,j+1}-m_{i+2,j+1}}^{m_{i,j+1}} - m_{i+1,j+1} \\ 
 & \quad + \overbrace{a_{i+1}m_{i+2,j+1}-m_{i+3,j+1}}^{m_{i+1,j+1}} - m_{i+2,j+1} \\ 
 & \quad + \quad \quad \vdots \\
 & \quad + \overbrace{a_{j-1}m_{j,j+1}-m_{j+1,j+1}}^{m_{j-1,j+1}} - m_{j,j+1} \\  
 & \quad +  a_jm_{j+1,j+1}  - m_{j+1,j+1} + a_{j+1} -1 
\end{align*} 
Most terms cancel, leaving $\abs{\mathcal{M}_{i,j+1}}=m_{i,j+1}+a_jm_{j+1,j+1}-m_{j,j+1}-1= m_{i,j+1}$, 
with the latter equality following from the unimodular rule.
\end{proof}

We now extend this 
to arbitrary admissible triangulations of $\V$.

\begin{theorem}\label{thm:arbitrary-matchings} 
Let $T$ be an admissible triangulation of $\V$ and let 
$a_i$ be the number of triangles incident with $(i,0)$, for every $i$. Then 
$(a_i)_i$ is the quiddity row of an infinite frieze  $\mathcal F=(m_{ij})_{i,j}$.
Moreover, 
$
m_{ij}=\abs{\mathcal{M}_{ij}},
$
for all $i \le j$. \\
\end{theorem}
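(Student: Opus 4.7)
The plan is to reduce the general statement to Proposition~\ref{prop:entries-matching}, which already covers admissible triangulations of $\V$ with no peripheral arcs on the lower boundary. The new ingredient is handling peripheral arcs $[u,v]$ of $T$ on the lower boundary, and the natural tools are the reduction of \cite[Theorem~3.7]{tschabold} together with the Conway--Coxeter/BCI correspondence for the triangulated polygon cut off by such an arc.

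First, I would establish that $(a_i)_i$ is the quiddity row of an infinite frieze. Whenever $a_k=1$, admissibility forces the unique triangle incident to $(k,0)$ to be bounded by the two boundary edges at $k$ together with a peripheral arc $[k-1,k+1]$ of $T$ on the lower boundary. Deleting $(k,0)$ from the lower boundary and this triangle from $T$ yields an admissible triangulation of a modified strip whose quiddity row is obtained from $(a_i)_i$ by deleting $a_k$ and decreasing each of $a_{k-1},a_{k+1}$ by $1$; this operation preserves the frieze property in both directions (\cite[Theorem~3.7]{tschabold}). Iterating within any finite window, we eventually reach the case where all remaining entries are $\ge 2$, at which point Corollary~\ref{cor:all-2-ok} applies and pulling back through the reductions gives the frieze $\mathcal{F}$ with quiddity row $(a_i)_i$.

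Second, to prove $m_{ij}=\abs{\mathcal{M}_{ij}}$, I would induct on the number of peripheral lower-boundary arcs of $T$ having both endpoints in $\{i,i+1,\dots,j\}$. The base case, where no such arcs exist, essentially reproduces the argument of Proposition~\ref{prop:entries-matching}: the derivation of Lemma~\ref{lm:matchingrelation} is local at the vertices $i,\dots,j$ and uses only the absence of peripheral lower-boundary arcs among them, so nothing outside this interval can interfere. For the inductive step, choose an innermost peripheral lower-boundary arc $[u,v]$ with $i\le u<v\le j$. This arc separates the plane into an interior polygon $P$ on the vertices $u,u+1,\dots,v$ (triangulated by $T\cap P$) and an exterior. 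Interior vertices $u+1,\dots,v-1$ can only be matched with triangles of $P$, so a matching in $\mathcal{M}_{ij}$ splits according to whether the triangle of $P$ incident to $[u,v]$ is used by $u$, by $v$, or by neither (in which case it is used by some interior vertex). In each of these three cases, the count factors as a product of a matching count in $P$ (evaluated as an entry of the finite frieze of $P$ via \cite{bci}) and a matching count in the triangulation $\widetilde T$ obtained by collapsing $P$ to the edge $[u,v]$ and deleting the interior vertices, to which the inductive hypothesis applies. Summing these three contributions and matching them against the determinantal formula \eqref{friezerelationa} for $m_{ij}$, after the corresponding block decomposition of the tridiagonal matrix along the rows indexed by $u,\dots,v$, yields the required equality.

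The main obstacle will be verifying the bookkeeping in the inductive step: tracking the three cases for the triangle on $[u,v]$ and showing that their summed contributions coincide with the block-determinantal expansion of $m_{ij}$ along the indices $u,\dots,v$. This is precisely the infinite-strip analogue of the classical BCI argument, now spliced into an infinite frieze via the determinantal formula \eqref{friezerelationa} rather than a cyclic polygon expansion.
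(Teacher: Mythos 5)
Your overall strategy---fix the window $\{i,\dots,j\}$, induct on the number of peripheral lower-boundary arcs relevant to it, handle the arc-free case via Proposition~\ref{prop:entries-matching}, and in the inductive step cut off the polygon below a peripheral arc---is essentially the paper's route. The paper first invokes Lemma~\ref{lm:peripheral-finite} to reduce to finitely many peripheral arcs and then removes a single peripheral triangle (an ear at a vertex with $a_k=1$) per step, recording how both $m_{ij}$ and $\abs{\mathcal M_{ij}}$ transform under one removal via the formula of \cite[Theorem~3.1]{tschabold}. Two caveats on your version. First, your base case is weaker than the hypothesis of Lemma~\ref{lm:matchingrelation}: it still admits peripheral arcs with one endpoint in the window or spanning it. The lemma does survive (its derivation needs only that consecutive window vertices share exactly one triangle and non-consecutive ones share none, and the latter is exactly what the absence of peripheral arcs with both endpoints in the window gives), but this must be checked rather than asserted as ``locality''. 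Second, ``innermost'' is inconsistent with your description: an innermost such arc bounds a polygon with no diagonals, i.e.\ a single triangle, in which case your step is literally the paper's ear removal; for a genuine polygon $P$ you must mean an outermost arc.

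The substantive problem is the case analysis in your inductive step. With $i\le u<v\le j$, the $v-u-1$ interior vertices $u+1,\dots,v-1$ of $P$ can only be matched to triangles of $P$, and $P$ has exactly $v-u-1$ triangles; hence in \emph{every} matching in $\mathcal M_{ij}$ the interior vertices are matched bijectively onto the triangles of $P$. In particular the triangle of $P$ on $[u,v]$ is always used by an interior vertex and never by $u$ or $v$ (indeed $u$ and $v$ are always matched outside $P$). Two of your three cases are empty, so the anticipated ``sum of three contributions'' matched against a three-term block expansion of (\ref{friezerelationa}) does not exist; this is exactly the bookkeeping you defer, and as set up it would not close. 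The correct statement is simpler: by \cite{bci} the number of incidence-preserving bijections from the interior vertices of $P$ onto its triangles is the penultimate-row entry of the finite frieze of $P$, namely $1$, so $\abs{\mathcal M_{ij}(T)}=\abs{\mathcal M_{\tilde i\tilde j}(\widetilde T)}$ with no correction factor, and $m_{ij}=\tilde m_{\tilde i\tilde j}$ follows by iterating the ``otherwise'' case of the reduction formula once per ear of $P$. Repaired in this way your argument collapses to the paper's proof; as written, the key step rests on a false premise.
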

Note that by Theorem~\ref{thm:friezes-realize} any infinite frieze is realizable and thus 
the entries of such 
a frieze are matching numbers for any triangulation realizing it. 

\medskip

Before commencing the proof, we first note that our task can be simplified somewhat: 
Let $T$ be an admissible triangulation of $\V(M_1,M_2)$, let $i\le j$. 
We have that $\mathcal M_{ij}$ only depends on the set $\Delta_T(i, j)$ of triangles 
incident with a vertex in $\{(i,0),\dots, (j,0)\}$. As a consequence of the following lemma, for which 
the proof is straightforward, we see that it is sufficient to prove Theorem~\ref{thm:arbitrary-matchings} 
for admissible triangulations having only finitely many peripheral arcs and 
no asymptotic arcs. 

\begin{lm}\label{lm:peripheral-finite}
Let $T$ be an admissible triangulation of $\V(M_1,M_2)$, and let $i$, $j$ with $i\le j$. 
Then there exists an admissible triangulation $T'$ of $\V(M_1,M_2')$, where $T'$ has only 
finitely many peripheral arcs and no asymptotic arcs, such that there is a one-to-one correspondence 
between $\Delta_T(i,j)$ and $\Delta_{T'}(i,j)$ preserving incidences with the vertices $\{(i,0),\dots, (j,0)\}$. 
\end{lm}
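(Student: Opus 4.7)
The plan is to construct $T'$ by retaining only the finitely many triangles of $\Delta_T(i,j)$ (suitably modified so that no vertex at $\pm\infty$ remains) and then completing to a maximal triangulation outside this region using only bridging arcs. Since $T$ is admissible, each $(k,0)$ with $i\le k\le j$ is incident with only finitely many arcs of $T$, and therefore with only finitely many triangles, so $\Delta_T(i,j)$ is automatically a finite collection. I will write $V$ for the (finite) set of vertices appearing as corners of these triangles and $A$ for the (finite) set of arcs appearing as their sides; $V$ is a subset of $M_1\cup M_2\cup\{-\infty,+\infty\}$.

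The first main step is to introduce two fresh upper-boundary vertices $v^-=(Q^-,1)$ and $v^+=(Q^+,1)$, with $Q^-$ strictly smaller and $Q^+$ strictly larger than every first coordinate occurring in $V$. I would then form a modified arc set $A'$ by replacing each asymptotic arc $[p,-\infty]\in A$ with $[p,v^-]$, each $[p,+\infty]\in A$ with $[p,v^+]$, and (if it happens to occur) the generic arc $[-\infty,+\infty]$ with the peripheral arc $[v^-,v^+]$. Because $v^\pm$ lie outside a compact neighborhood of $V$, each such replacement can be realised by a homotopy supported in that neighborhood and hence preserves every crossing relation; so the arcs of $A'$ are pairwise compatible in $\V(M_1,M_2')$ (with $M_2'\supseteq\{v^-,v^+\}$ and the upper vertices of $V$) and bound a finite, bounded union $R$ of genuine triangles whose vertices all lie on the boundary of $\V$. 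The map $\tau\mapsto\tau'$ sending each $\tau\in\Delta_T(i,j)$ to its modified version is then a bijection onto a finite set of triangles, and by construction it preserves the list of incidences with vertices in $\{(i,0),\dots,(j,0)\}$ since only vertices at $\pm\infty$ were altered.

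The second main step is to extend $A'$ to a maximal triangulation $T'$ of $\V(M_1,M_2')$ for an appropriate $M_2'$. The complement $\V\setminus R$ decomposes into at most two unbounded components, one on each side of $R$, together with finitely many bounded components. I would fill each bounded component with finitely many pairwise compatible bridging and/or peripheral arcs; and for each unbounded component I would enlarge $M_2'$ by an infinite sequence of new upper-boundary vertices with first coordinates tending to $\pm\infty$ and fill the component using only bridging arcs in a fan/zigzag pattern connecting those new vertices to the lower-boundary vertices. Crucially, no arc introduced in this step is incident with any vertex in $\{(i,0),\dots,(j,0)\}$, since those vertices already lie in $R$; moreover each lower-boundary vertex is incident with only finitely many arcs of $T'$, so $T'$ is admissible, it contains only finitely many peripheral arcs (those of $A'$ plus those used in the bounded components), and it contains no asymptotic arcs. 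The hard part will be verifying maximality in the outer regions: one must show that any candidate asymptotic arc based at a marked point or at $\pm\infty$ must cross infinitely many of the bridging arcs added in the fan/zigzag, so that it is already incompatible with $T'$ and cannot be adjoined; maximality inside each bounded piece is immediate by construction. Since $\Delta_{T'}(i,j)$ then agrees exactly with the image of $\tau\mapsto\tau'$, this yields the required bijection preserving all incidences with the vertices $(i,0),\dots,(j,0)$.
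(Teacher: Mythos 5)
The paper never actually proves this lemma---the authors dismiss it as ``straightforward'' and proceed directly to the proof of Theorem~\ref{thm:arbitrary-matchings}---so your argument can only be judged on its own terms, and its architecture is sound and surely the intended one: admissibility gives finiteness of $\Delta_T(i,j)$; capping the corners at $\pm\infty$ by two new far-away upper vertices $v^{\pm}$ makes the relevant triangles compact without disturbing incidences along the lower boundary; and refilling the complement by polygon triangulations in the bounded pieces and bridging zigzags in the unbounded pieces yields an admissible triangulation with finitely many peripheral arcs and no asymptotic arcs. One remark: the point you defer as ``the hard part'', maximality in the outer regions, is in fact routine. Once every complementary region of $T'$ is a compact triangle, any curve disjoint from all arcs of $T'$ lies in the closure of a single such triangle and is therefore isotopic to a side or a boundary segment; in particular an asymptotic or generic arc, being unbounded, can never be compatible with $T'$. (Also, your isotopies moving arc-tails to $v^{\pm}$ are supported \emph{outside} the compact window containing $V$, not inside it, but the intent---truncate the tails and reroute them in parallel---is correct.)

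There is, however, one step that fails as literally written. Your $M_2'$ consists only of the upper vertices of $V$, the points $v^{\pm}$, and fan vertices beyond $Q^{\pm}$; discarding the remaining points of $M_2$ can make elements of $A'$ \emph{contractible}, hence not arcs at all, and this is not an exotic case. (i) If some $\tau \in \Delta_T(i,j)$ has an upper peripheral arc $[(q,1),(q'',1)]$ as a side, then no point of $M_2$ strictly between $q$ and $q''$ can lie in $V$: a triangle with a corner at such a point and a corner on the lower boundary would need a bridging side crossing $[(q,1),(q'',1)]$. So all of those points are discarded, and the retained curve becomes isotopic to a boundary segment of $\V(M_1,M_2')$, which by convention is not an arc. (ii) If the generic arc occurs as a side of some $\tau$, then $T$ contains no bridging arcs, hence $V$ contains no upper vertices at all, and your replacement $[v^-,v^+]$ is likewise contractible; equivalently, the region between it and the upper boundary would be an empty bigon, which cannot occur in a triangulation. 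The same happens to the replacement $[(q,1),v^+]$ of an upper asymptotic arc when $(q,1)$ is the rightmost upper vertex of $V$. The repair is easy but should be stated: either delete each such curve from $A'$ and let the corresponding side of the modified triangle be the boundary segment it is isotopic to, or retain (or insert) one upper marked point spanned by each such curve and triangulate the small polygon this creates above it. With that amendment the bijection $\tau \mapsto \tau'$, admissibility, finiteness of the set of peripheral arcs, absence of asymptotic arcs, and maximality all go through as you describe.
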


\begin{proof}[Proof of Theorem~\ref{thm:arbitrary-matchings}]

We prove the result by induction on the number $t$ of peripheral arcs of $T$ on the lower 
boundary. 

If $t=0$, the result follows from 
Proposition~\ref{prop:entries-matching}. 

Let $t\ge 0$ and assume the result holds for every admissible triangulation of $\V$ 
containing $t$ peripheral arcs on the lower boundary. We consider a triangulation $T$ 
of $\V$ with $t+1$ peripheral arcs on the lower boundary. There exists a peripheral 
triangle 
at some vertex $(k,0)$, and $a_k=1$. Removing this triangle 
leads to a 
triangulation $\widetilde T$ of $\V$ (relabeling the vertices on the lower boundary on the 
right of $k-1$ appropriately), with $t$ peripheral arcs, providing the sequence 
$(\dots, a_{k-2}, a_{k-1}-1,a_{k+1}-1,a_{k+2},\dots)$. 
By induction, this sequence 
is the quiddity row of an infinite frieze $\widetilde{\mathcal F}=(\tilde m_{ij})_{i,j}$ 
such that the entries $\tilde m_{ij}$ are the matching numbers for the 
triangulation $\widetilde T$. 

One can routinely check that the following holds (cf.  \cite[Theorem 3.1]{tschabold}): 
$$
m_{ij}=
\begin{cases}
\tilde m_{\tilde{i}+1,\tilde{j}}+\tilde m_{\tilde{i},\tilde{j}} & i=k+1,\\
\tilde m_{\tilde{i},\tilde{j}-1}+\tilde m_{\tilde{i},\tilde{j}} &j=k-1,\\
\tilde m_{\tilde{i},\tilde{j}} & \mbox{otherwise,} 
\end{cases}
$$
where $\tilde{i}=i$ if $i\le k$, $\tilde{i}=i-1$ for $i>k$, $\tilde{j}=j$ for $j<k$ and 
$\tilde{j}=j-1$ for $j\ge k$. 
That the matching number $\abs{\mathcal M_{ij}}$ is given in the same way, can be 
checked by direct computation. 
\end{proof}

\begin{cor}
Let $\mathcal{F}=(m_{ij})_{i,j}$ be a periodic infinite frieze and let $T$ be a triangulation of an annulus 
realizing its quiddity sequence. Considering $T$ in the universal cover, we have that the 
entries $m_{ij}$, for $i\le j$, are matching numbers between 
marked points on the lower boundary and triangles. 
\end{cor}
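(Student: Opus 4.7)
The plan is to reduce this statement to a direct application of Theorem~\ref{thm:arbitrary-matchings} by passing to the universal cover.

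First, fix a triangulation $T$ of $A_{n,m}$ whose (outer) quiddity sequence is the quiddity sequence of $\mathcal{F}$, and consider its preimage $\pi^{-1}(T)$ in the universal cover $\U = \U(n,m)$. After rescaling the horizontal coordinate (dividing by $m$ when $m>0$), $\U$ becomes an infinite strip $\V(M_1,M_2)$ in the sense of Section~\ref{sec:final-matchings}, with $M_1 = \{(i,0) \mid i \in \ZZ\}$ the lifted marked points of the outer boundary and $M_2 \subseteq \{(q,1) \mid q \in \mathbb{Q}\}$ the lifted marked points of the inner boundary (take $M_2 = \emptyset$ when $m=0$).

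Next, I verify that $\pi^{-1}(T)$ is an admissible triangulation of $\V(M_1,M_2)$. Peripheral and bridging arcs of $T$ lift to peripheral and bridging arcs, while any Pr\"ufer or adic arcs in $T$ lift to curves from a marked point spiralling outward to $+\infty$ or $-\infty$, i.e.\ to asymptotic arcs in $\V$; in the case $m=0$ the generic arc $[-\infty,+\infty]$ may also be adjoined as a lift of the meridian. Since each marked point of $A_{n,m}$ is incident with finitely many arcs of $T$, every vertex of $M_1$ is incident with only finitely many arcs of $\pi^{-1}(T)$, so admissibility holds. Maximality of the pairwise compatible collection $\pi^{-1}(T)$ follows from maximality of $T$: any arc in $\V$ not in $\pi^{-1}(T)$ projects to an arc of $A_{n,m}$ not in $T$, hence crosses some arc of $T$, whose lifts cross the original arc upstairs.

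By the very definition of the outer quiddity sequence, the number of triangles of $\pi^{-1}(T)$ incident with $(i,0) \in M_1$ equals $a_i$ (periodically extended), so $(a_i)_i$ is the quiddity row of $\mathcal{F}$. Applying Theorem~\ref{thm:arbitrary-matchings} to $\pi^{-1}(T)$ therefore yields $m_{ij} = \abs{\mathcal{M}_{ij}(\pi^{-1}(T))}$ for all $i \le j$, which is precisely the content of the corollary.

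The only point that needs more than routine care is the passage from arcs of $T$ to arcs in $\V$: one must check that Pr\"ufer/adic arcs of $A_{n,m}$ and (when $m=0$) the absence of an inner boundary are faithfully captured by the asymptotic arcs and the limit points $\pm\infty$ of the $\V$ formalism. Once this identification is made, the admissibility and maximality of $\pi^{-1}(T)$ are immediate, and Theorem~\ref{thm:arbitrary-matchings} does all of the real work.
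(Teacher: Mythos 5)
Your proposal is correct and matches the paper's (implicit) argument: the corollary is stated as an immediate consequence of Theorem~\ref{thm:arbitrary-matchings}, using exactly the identification of $\pi^{-1}(T)$ in the universal cover as an admissible triangulation of the infinite strip, which the paper has already set up in Section~\ref{sec:friezes-triangulations}. Your extra care about lifting asymptotic arcs and adjoining the generic arc when $m=0$ is a reasonable fleshing-out of details the paper leaves unstated.
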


\begin{remark}\label{rm:minimal-no-unused}
Let $\mathcal{F}$ be an infinite frieze. In any triangulation of the infinite strip which realizes 
$\mathcal{F}$ and has no peripheral or asymptotic arcs 
on the upper boundary, there are no unused 
triangles when computing matching numbers (as all triangles are incident with a vertex on the 
lower boundary).
\end{remark}

%
\subsection*{Acknowledgements}
%

The first author was supported by the Austrian Science Fund projects 
FWF P25141-N26 and FWF W1230. 
She acknowledges support by 
the Institute Mittag-Leffler, Djursholm, Sweden.  
The second author was supported by the Austrian Science Fund (FWF): 
Project No.\ P25141-N26. 
In addition, all authors 
acknowledge support from  NAWI Graz.


\end{document}